
\documentclass{ws-ijgmmp}

\usepackage[shortlabels]{enumitem}
\usepackage{ifpdf}
\ifpdf
 \usepackage[colorlinks,final, backref=page, hyperindex]{hyperref}
\else
  \usepackage[colorlinks,final,backref=page,hyperindex,hypertex]{hyperref}
\fi

\usepackage{tikz}
\usepackage[active]{srcltx}

\newtheorem{thm}{Theorem}[section]
\newtheorem{lem}[thm]{Lemma}
\newtheorem{cor}[thm]{Corollary}
\newtheorem{pro}[thm]{Proposition}
\newtheorem{ex}[thm]{Example}

\newtheorem{defi}[thm]{Definition}
\newtheorem{case}{Case}
\numberwithin{equation}{section}

\newcommand{\nc}{\newcommand}
\newcommand{\delete}[1]{}

\nc{\mlabel}[1]{\label{#1}}  
\nc{\mcite}[1]{\cite{#1}}  
\nc{\mref}[1]{\ref{#1}}  
\nc{\mbibitem}[1]{\bibitem{#1}} 

\delete{
\nc{\mlabel}[1]{\label{#1}{\hfill \hspace{1cm}{\bf{{\ }\hfill(#1)}}}}
\nc{\mcite}[1]{\cite{#1}{{\bf{{\ }(#1)}}}}  
\nc{\mref}[1]{\ref{#1}{{\bf{{\ }(#1)}}}}  
\nc{\mbibitem}[1]{\bibitem[\bf #1]{#1}} 
}

\setlength{\baselineskip}{1.8\baselineskip}

\newcommand {\emptycomment}[1]{}

\delete{
\setlength{\baselineskip}{1.8\baselineskip}

\newcommand{\emptycomment}[1]{}

}

\def\al{\alpha}
\def\be{\beta}

\def\hh{\mathfrak{h}}
\def\gg{\mathfrak{g}}
\def\ff{\mathfrak{f}}
\def\ee{\mathfrak{e}}

\def\bb{\mathfrak{b}}

\def\oo{\mathfrak{o}}
\def\sp{\mathfrak{sp}}
\def \<{\langle}
\def \>{\rangle}

\def\GG{\mathcal{G}}

\newcommand{\C}{\mathbb {C}}

\newcommand{\N}{\mathbb{N}}
\newcommand{\Z}{\mathbb{Z}}

\def\V{\mathrm{V}}

\allowdisplaybreaks

\begin{document}

\markboth{C. Bai and D. Gao}
{Compatible root graded anti-pre-Lie algebraic structures}

%
\catchline{}{}{}{}{}
%

\title{Compatible root graded anti-pre-Lie algebraic structures on finite-dimensional complex simple Lie algebras
}

\author{Chengming Bai}

\address{Chern Institute of Mathematics and LPMC, Nankai University,\\ Tianjin 300071, P. R. China\\
\email{baicm@nankai.edu.cn} }

\author{Dongfang Gao\footnote{
Corresponding author.}}

\address{Chern Institute of Mathematics and LPMC, Nankai University, \\Tianjin 300071, P. R. China,\\ 
and Institut Camille Jordan, Universit\'{e} Claude Bernard Lyon 1,\\
  Lyon, 69622, France     \\
gao@math.univ-lyon1.fr}

\maketitle


\begin{abstract}
We investigate the compatible root graded anti-pre-Lie algebraic
structures on any finite-dimensional complex simple Lie algebra by
the representation theory of ${\rm sl_2(\C)}$. We show that there
does not exist a compatible root graded anti-pre-Lie algebraic
structure on a finite-dimensional complex simple Lie algebra
except ${\rm sl_2(\C)}$, whereas there is exactly one compatible
root graded anti-pre-Lie algebraic structure on ${\rm sl_2(\C)}$.
\end{abstract}

\keywords{anti-pre-Lie algebra; simple Lie algebra; root; irreducible representation.}

\ccode{Mathematics Subject Classification 2020: 17B10, 17B65, 17B66, 17B68, 17B70}

\section{Introduction}

It is well-known that symplectic forms on Lie algebras give
compatible pre-Lie algebraic structures on the Lie algebras
themselves \cite{Bai}, that is, pre-Lie algebras are regarded as
the underlying algebraic structures of symplectic forms on Lie
algebras. The ``symmetric" version of a symplectic form on a Lie
algebra is a nondegenerate commutative 2-cocycle \cite{Dzh}.
Accordingly, the notion of anti-pre-Lie algebras was introduced in
\cite{LB} as the underlying algebraic structures of nondegenerate
commutative 2-cocycles on Lie algebras. Moreover, anti-pre-Lie
algebras can be regarded as the ``anti-structures" of pre-Lie
algebras since anti-pre-Lie algebras are characterized as
Lie-admissible algebras whose negative left multiplication
operators give representations of the commutator Lie algebras,
whereas pre-Lie algebras are characterized as Lie-admissible
algebras whose left multiplication operators give representations
of the commutator Lie algebras. Note that pre-Lie algebras arose
from the study of deformations of associative algebras
\cite{Ger}, affine manifolds and affine structures on Lie groups
\cite{Kos} and convex homogeneous cones \cite{Vin}, and
appeared in many fields of mathematics and mathematical physics
(\cite{Bai,  Bur} and the references therein), whereas anti-pre-Lie
algebras are related to a lot of algebraic structures such as
transposed Poisson algebras \cite{BBGW,  DFK}, differential
algebras \cite{CGWZ} and anti-dendriform algebras \cite{GLB}.

Finite-dimensional simple Lie algebras over the complex number
field $\C$ were classified by Cartan and Killing during the decade
1890-1900. There are four families of classical Lie algebras:
$A_n$ $(n\geq 1)$, $B_n$ $(n\geq 2)$, $C_n$ $(n\geq 3)$ and $D_n$
$(n\geq 4)$.  In addition, there are five exceptional Lie
algebras: $E_6$, $E_7$, $E_8$, $F_4$ and $G_2$. It is known that
there does not exist a compatible pre-Lie algebraic structure on
any finite-dimensional complex simple Lie algebra \cite{Med}.
However, it is different in the case of anti-pre-Lie algebras
since there is a compatible anti-pre-Lie algebraic structure on
${\rm sl}_2(\C)$ which has been given in \cite{LB}. So it is
natural to consider whether there are compatible anti-pre-Lie
algebraic structures on any finite-dimensional complex simple Lie
algebra and if the answer is positive, then consider their
classification.

Unfortunately, it seems still hard and complicated to give a
complete classification of compatible anti-pre-Lie algebraic
structures on any finite-dimensional complex simple Lie algebra.
Note that there is a root space decomposition for a
finite-dimensional complex simple Lie algebra. Hence for the
compatible anti-pre-Lie algebraic structures, there is an
additionally natural condition which is ``consistent" with the
root space decomposition, that is, they are the  compatible root
graded anti-pre-Lie algebraic structures. Note that the compatible
anti-pre-Lie algebraic structure on ${\rm sl}_2(\C)$ given in
\cite{LB} is root graded. Therefore in this paper, we study and
classify the compatible root graded anti-pre-Lie algebraic
structures on finite-dimensional complex simple Lie algebras.

 The paper is organized as follows.
 In Section 2, we recall some notions and basic results on anti-pre-Lie algebras, finite-dimensional complex simple Lie algebras
 and the representation theory of ${\rm sl_2(\C)}$.
 In Section 3, we show that there exists exactly one compatible root graded anti-pre-Lie algebraic structure on ${\rm sl_2(\C)}$ which is the example given in \cite{LB}.
In Section 4, we prove that there does not exist a compatible root
graded anti-pre-Lie algebraic structure on a finite-dimensional complex simple Lie
algebra except  ${\rm sl_2(\C)}$.

Throughout this paper, we denote by $\Z,\Z_+,\N, \C$ and $\C^*$ the
set of integers, positive integers, non-negative integers, complex numbers and nonzero complex numbers respectively.
All vector spaces and algebras are over $\C$, unless otherwise stated.

\section{Preliminaries}
 We recall some notions and results on anti-pre-Lie algebras, finite-dimensional complex simple Lie algebras and the representation theory of ${\rm sl_2(\C)}$ for future convenience.
\begin{defi}[\cite{LB}]
An {\bf anti-pre-Lie algebra} is a vector space $A$ with a binary
operation $\circ$ satisfying the following two equations.
\begin{align}
&x\circ (y\circ z)-y\circ (x\circ z)=[y,x]\circ z,\label{anti-1}\\
&[[x,y], z]+[[y,z], x]+[[z,x], y]=0,\label{anti-2}
\end{align}
where $[x,y]=x\circ y-y\circ x$ for any $x,y,z\in A$.
\end{defi}

\begin{lem}[\cite{LB}]\label{property-1}
Let $(A, \circ)$ be an anti-pre-Lie algebra.
Then the following results hold.
\begin{enumerate}
\item The commutator
$$[x,y]=x\circ y-y\circ x, \ \ \  x,y\in A,$$
defines a Lie algebra, denoted by $\GG(A)$, which is called the
{\bf sub-adjacent Lie algebra} of $(A,\circ)$. Furthermore,
$(A,\circ)$ is called a {\bf compatible anti-pre-Lie algebraic structure} on the Lie algebra  $\GG(A)$.
\item Let $\rho: \GG(A)\rightarrow \frak g\frak l(A)$ be a linear map defined by
$\rho(x)=-L_x$ for any $x\in\GG(A)$, where $-L_x$ is the negative left multiplication
operator, that is, $-L_x(y)=-x\circ y$ for any $y\in A$. Then
$\rho$ defines a representation of the Lie algebra $\GG(A)$.
\end{enumerate}
\end{lem}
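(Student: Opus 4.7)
The plan is to read both parts of the lemma directly off the two defining axioms, since the statement is essentially a repackaging of the hypotheses in the definition of an anti-pre-Lie algebra.

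For part (1), I would first check antisymmetry of the bracket $[x,y]=x\circ y-y\circ x$: this is immediate from the formula itself, as swapping $x$ and $y$ negates the right-hand side. The Jacobi identity for the same bracket is literally equation \eqref{anti-2} imposed in the definition, so no computation is needed beyond pointing at the axiom. Thus $\GG(A)$ is a Lie algebra.

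For part (2), I would unpack the representation condition. With $\rho(x)=-L_x$, the requirement $\rho([x,y])=\rho(x)\rho(y)-\rho(y)\rho(x)$ becomes $-L_{[x,y]}=L_xL_y-L_yL_x$ as operators on $A$. Evaluating both sides on an arbitrary $z\in A$ reduces the identity to
$$x\circ(y\circ z)-y\circ(x\circ z)=-[x,y]\circ z=[y,x]\circ z,$$
which is exactly axiom \eqref{anti-1}. Hence $\rho$ preserves brackets.

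The only place one can stumble is sign-bookkeeping: the minus sign in $\rho(x)=-L_x$ is precisely what converts the $[y,x]$ appearing on the right-hand side of \eqref{anti-1} into a $[x,y]$ after the two minus signs pass through the operator commutator. Beyond this, there is no real obstacle, so I would simply present the antisymmetry, the Jacobi identity, and the commutator check in that order, each occupying one or two lines.
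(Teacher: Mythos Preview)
Your proposal is correct. The paper does not actually prove this lemma; it simply cites it from \cite{LB} and uses it as a black box, so there is no ``paper's proof'' to compare against. Your argument---antisymmetry from the formula, Jacobi from axiom~\eqref{anti-2}, and the representation identity from axiom~\eqref{anti-1} after tracking the sign in $\rho(x)=-L_x$---is exactly the standard verification and would serve perfectly well as a self-contained proof.
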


Let $${\rm sl_2(\C)}={\rm span}\{e_{12}=\begin{pmatrix}
0&1\\
0&0
\end{pmatrix},
e_{21}=\begin{pmatrix}
0&0\\
1&0
\end{pmatrix},
h_1=\begin{pmatrix}
1&0\\
0&-1
\end{pmatrix} \},$$ denote the 3-dimensional simple Lie algebra satisfying the following Lie brackets.
$$[h_1, e_{12}]=2e_{12},\ \ \ [h_1, e_{21}]=-2e_{21},\ \ \ [e_{12}, e_{21}]=h_1.$$

\begin{ex}\label{ex:LB}(\cite[Example 2.21]{LB}).
Let $\circ$ be the binary operation  on ${\rm sl_2(\C)}$
defined by
\begin{align}
&h_1\circ e_{12}=-2e_{12}, \ e_{12}\circ h_1=-4e_{12},\ h_1\circ e_{21}=2e_{21},\  e_{21}\circ h_1=4e_{21},  \label{sl2-graded-1}\\
&e_{12}\circ e_{21}=\frac{1}{2}h_1, \  e_{21}\circ e_{12}=-\frac{1}{2}h_1,\ h_1\circ h_1=e_{12}\circ e_{12}=e_{21}\circ e_{21}=0.\label{sl2-graded-2}
\end{align}
Then $({\rm sl_2(\C)}, \circ)$ is a compatible anti-pre-Lie
algebraic structure on the Lie algebra ${\rm sl_2(\C)}$.
\end{ex}

Suppose that $\L$ is a finite-dimensional complex simple Lie
algebra. Then there exists a nilpotent self-normalizing subalgebra
$\hh$ of $\L$, called the {\bf  Cartan subalgebra} of $\L$.
Moreover,  $\L$ has the following {\bf root space decomposition}
(see \cite{Car, Hum}).
$$\L=\L_0\oplus\oplus_{\delta\in\Phi}\L_\delta,$$
where $\L_0=\hh$ is the Cartan subalgebra, $\Phi\subseteq \hh^*$ is the {\bf set of roots} of $\L$ and
$$\L_\delta=\{x\in\L~|~[h, x]=\delta(h)x,  h\in\hh\},\ \ \  \delta\in\Phi,$$ is the {\bf root space}.

\begin{defi}
Let $\L$ be a finite-dimensional complex simple Lie algebra with
the root space decomposition
$\L=\L_0\oplus\oplus_{\delta\in\Phi}\L_\delta$. Then a compatible
anti-pre-Lie algebraic structure $(\L,\circ)$ on $\L$ is called
{\bf root graded} if $L_{\delta_1}\circ L_{\delta_2}\subseteq
L_{\delta_1+\delta_2}$ for any $\delta_1,\delta_2\in \Phi_0$,
where $\Phi_0=\Phi\cup \{0\}$. In this case, $(\L,\circ)$ is
called a {\bf compatible root graded anti-pre-Lie algebraic
structure} on $\L$.
\end{defi}

\begin{ex}\label{graded-on-sl2}
Note that ${\rm sl_2(\C)}={\rm span}\{e_{12}=\begin{pmatrix}
0&1\\
0&0
\end{pmatrix},
e_{21}=\begin{pmatrix}
0&0\\
1&0
\end{pmatrix},
 h_1=\begin{pmatrix}
1&0\\
0&-1
\end{pmatrix}\}$ has the following root space decomposition.
$${\rm sl_2(\C)}={\rm sl_2(\C)}_0\oplus{\rm sl_2(\C)}_\delta\oplus{\rm sl_2(\C)}_{-\delta},$$
where ${\rm sl_2(\C)}_0=\C h_1$ is the Cartan subalgebra of ${\rm
sl_2(\C)}, {\rm sl_2(\C)}_\delta=\C e_{12}, {\rm
sl_2(\C)}_{-\delta}=\C e_{21}$ and $\delta:\C h_1\rightarrow \C$
is a linear map defined by $\delta(h_1)=2$. Hence by
Eqs.~\eqref{sl2-graded-1} and \eqref{sl2-graded-2}, the
anti-pre-Lie algebra in Example~\ref{ex:LB} is a compatible root
graded anti-pre-Lie algebraic structure on ${\rm sl_2(\C)}$.
\end{ex}

\begin{defi}
Let $V$ be a representation of ${\rm sl_2(\C)}$. Then $V$ is
called a {\bf weight representation} of ${\rm sl_2(\C)}$ if
$V=\oplus_{\lambda\in\C}V_\lambda$, where $V_\lambda=\{v\in
V~|~h_1.v=\lambda v\}$. Furthermore, $V_\lambda$ is called a {\bf
weight space} of weight $\lambda$ and the {\bf weight set} of $V$
is the set consisting of $\lambda$ with $V_\lambda\ne 0$.
\end{defi}

\begin{defi}
Let $V=\oplus_{\lambda\in\C}V_\lambda$ be a weight representation of ${\rm sl_2(\C)}$.
If a nonzero vector $v\in V_\lambda$ satisfies $e_{12}.v=0$, then $v$ is called a {\bf highest weight vector} of weight $\lambda$.
Similarly, if a nonzero vector $v\in V_\lambda$ satisfies $e_{21}.v=0$, then $v$ is called a {\bf lowest weight vector} of weight $\lambda$.
\end{defi}

\begin{lem}[\cite{Hum, Maz}]\label{representation-1}
For any $m\in\N$, let $\V(m)=\oplus_{i=0}^{m}\C v_i$ be an $(m+1)$-dimensional vector space.
Then $\V(m)$ is an irreducible weight representation of ${\rm sl_2(\C)}$ with the following actions.
$$h_1.v_i=(m-2i)v_i,\ e_{21}.v_i=(i+1)v_{i+1},\ e_{12}.v_i=(m-i+1)v_{i-1}, \  0\le i\le m,$$
where $v_{-1}=v_{m+1}=0$. Moreover, any nonzero weight space of $\V(m)$ is 1-dimensional and the weight set of $\V(m)$ is $\{m, m-2, m-4,\cdots, -m+2, -m\}$.
\end{lem}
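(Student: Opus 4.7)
The plan is to verify three assertions in sequence: first, that the displayed formulas really do define a representation of $\mathrm{sl}_2(\C)$; second, that the vectors $v_0,\ldots,v_m$ are weight vectors spanning the claimed one-dimensional weight spaces; and third, that the module so constructed is irreducible.

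For the first step, I would directly check that the three operators $h_1,e_{12},e_{21}$, acting by the stated formulas on the basis $\{v_i\}$, satisfy the defining brackets $[h_1,e_{12}]=2e_{12}$, $[h_1,e_{21}]=-2e_{21}$ and $[e_{12},e_{21}]=h_1$ when evaluated on each $v_i$. The only nontrivial identity is the last one, which reduces to the numerical equality $(i+1)(m-i)-i(m-i+1)=m-2i$ after applying $e_{12}e_{21}$ and $e_{21}e_{12}$ to $v_i$; the other two reduce to $(m-2(i-1))-(m-2i)=2$ and $(m-2(i+1))-(m-2i)=-2$. This is a one-line check for each bracket.

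For the second step, the identity $h_1.v_i=(m-2i)v_i$ exhibits $v_i$ as a weight vector of weight $m-2i$. As $i$ ranges over $0,\ldots,m$, the weights $m,m-2,\ldots,-m$ are pairwise distinct, so the $v_i$ lie in distinct weight spaces; combined with $\dim\V(m)=m+1$, this forces each occupied weight space to be exactly $\C v_i$, yielding both the one-dimensionality statement and the description of the weight set.

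For irreducibility, let $W\subseteq\V(m)$ be a nonzero submodule. Since $W$ is stable under $h_1$, it decomposes into $h_1$-eigenspaces, each of which is contained in some $\C v_i$; hence $W$ contains some $v_{i_0}$. Applying $e_{12}$ repeatedly raises the weight: the formula $e_{12}.v_i=(m-i+1)v_{i-1}$ has nonzero coefficient for $1\le i\le m$, so iterating $i_0$ times produces a nonzero multiple of $v_0$, which therefore lies in $W$. Applying $e_{21}$ in turn and noting $e_{21}.v_i=(i+1)v_{i+1}$ has nonzero coefficient for $0\le i\le m-1$, we conclude $v_i\in W$ for all $i$, hence $W=\V(m)$. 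The only mildly delicate point is verifying that the scalar factors never vanish in the admissible range of indices, which is immediate from the explicit formulas; apart from that, the argument is routine and no genuine obstacle is expected.
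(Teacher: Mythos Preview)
Your argument is correct and complete: the bracket verifications, the weight-space analysis, and the irreducibility proof are all standard and accurately carried out. Note, however, that the paper does not supply its own proof of this lemma; it is stated with a citation to the textbooks of Humphreys and Mazorchuk and used as background. Your write-up therefore recovers the textbook argument rather than matching or diverging from anything in the paper itself.
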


\begin{lem}[\cite{Hum, Maz}]\label{representation-2}
Let $m\in\N$ and $V$ be an $(m+1)$-dimensional irreducible representation of ${\rm sl_2(\C)}$.
Then $V$ is isomorphic to $\V(m)$ as representations of ${\rm sl_2(\C)}$.
Moreover, any finite-dimensional representation of ${\rm sl_2(\C)}$ can be decomposed into a direct sum
of finite-dimensional irreducible representations of ${\rm sl_2(\C)}$.
\end{lem}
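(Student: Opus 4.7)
Since the result gathers two classical facts that the authors cite rather than prove, my plan is to sketch the standard highest weight argument for the first statement and Weyl's complete reducibility theorem for the second.

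For the first part, take a finite-dimensional irreducible representation $V$ of ${\rm sl_2(\C)}$. Working over $\C$, the operator $h_1$ has an eigenvector, and the relation $[h_1, e_{12}] = 2 e_{12}$ shows that $e_{12}$ raises the $h_1$-eigenvalue by $2$, so by finite-dimensionality there exists a nonzero highest weight vector $v_0$ with $e_{12}.v_0 = 0$ and $h_1.v_0 = \mu v_0$ for some $\mu \in \C$. Define $v_i := \frac{1}{i!} e_{21}^i . v_0$ for $i \ge 0$. A routine induction using $[e_{12}, e_{21}] = h_1$ yields the three formulas $h_1.v_i = (\mu - 2i)v_i$, $e_{21}.v_i = (i+1)v_{i+1}$, and $e_{12}.v_i = (\mu - i + 1)v_{i-1}$. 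Letting $m$ be the largest index with $v_m \ne 0$, the identity $0 = e_{12}.v_{m+1} = (\mu - m)v_m$ forces $\mu = m \in \N$, and the nonzero ${\rm sl_2(\C)}$-invariant subspace $\bigoplus_{i=0}^{m} \C v_i$ of $V$ must equal $V$ by irreducibility, giving an isomorphism $V \cong \V(m)$ with $\dim V = m+1$.

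For the second part, I would use the Casimir element $\Omega = \frac{1}{2} h_1^2 + h_1 + 2 e_{21} e_{12}$, which lies in the center of the universal enveloping algebra of ${\rm sl_2(\C)}$ and, by the first part, acts on each $\V(m)$ as the scalar $\frac{1}{2} m(m+2)$. Given a short exact sequence $0 \to W \to V \to V/W \to 0$ of finite-dimensional representations, induction on $\dim V$ reduces everything to the case where both $W$ and $V/W$ are irreducible. When the two Casimir eigenvalues are distinct, the generalized eigenspace decomposition of $\Omega$ on $V$ immediately splits the sequence; when they coincide, the splitting requires a separate argument exploiting the explicit action on weight vectors together with a careful analysis of the extension.

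The main obstacle is precisely this final equal-eigenvalue case, which is the heart of Weyl's complete reducibility theorem and is not mere bookkeeping. A slicker but less algebraic alternative would be the unitary trick: every finite-dimensional complex representation of ${\rm sl_2(\C)}$ integrates to a continuous representation of the compact group $\mathrm{SU}(2)$, and averaging any inner product against the Haar measure on $\mathrm{SU}(2)$ produces an invariant Hermitian form, so every invariant subspace admits an invariant orthogonal complement. In a purely algebraic paper the Casimir route is the more natural one, matching the references \cite{Hum, Maz} to which the authors defer.
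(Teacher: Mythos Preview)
Your proposal is correct as a sketch of the classical argument, and you have correctly identified that the paper does not prove this lemma at all: it is stated as a citation from \cite{Hum, Maz} with no accompanying proof. The highest weight construction you give for the first part and the Casimir/unitary-trick outline for the second are exactly the arguments one finds in those references, so there is nothing to compare---you have supplied the textbook proof that the authors chose to omit.
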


\section{Compatible root graded anti-pre-Lie algebraic structures on ${\rm sl_2(\C)}$}
We show that there exists exactly one compatible root graded
anti-pre-Lie algebraic structure on ${\rm sl_2(\C)}$, that is, the
one given in Example~\ref{ex:LB}.

Suppose that $({\rm sl_2(\C)}, \circ)$ is a compatible root graded anti-pre-Lie algebra on $${\rm sl_2(\C)}={\rm span}\{e_{12}=\begin{pmatrix}
0&1\\
0&0
\end{pmatrix},
e_{21}=\begin{pmatrix}
0&0\\
1&0
\end{pmatrix},
 h_1=\begin{pmatrix}
1&0\\
0&-1
\end{pmatrix}\}.$$
By the root space decomposition of  ${\rm sl_2(\C)}$  given in
Example \ref{graded-on-sl2}, we can write that
\begin{align}
   & h_1\circ e_{12}=\al_1e_{12},\ e_{12}\circ h_1=(\al_1-2) e_{12},\label{sl2-1}\\
   &h_1\circ e_{21}=\be_1e_{21},\ e_{21}\circ h_1=(\be_1+2) e_{21},  \label{sl2-2}\\
   &e_{12}\circ e_{21}=\gamma_1h_1,\ e_{21}\circ e_{12}=(\gamma_1-1) h_1,\label{sl2-3}\\
   &h_1\circ h_1=\lambda_1 h_1,\ e_{12}\circ e_{12}=e_{21}\circ e_{21}=0, \label{sl2-4}
\end{align}
where $\al_1,\be_1,\gamma_1,\lambda_1\in\C$.

\begin{lem}\label{lemma-sl2}
With the notations in Eqs.~\eqref{sl2-1}-\eqref{sl2-4},
we have $\al_1\ne 2, \be_1\ne -2$.
\end{lem}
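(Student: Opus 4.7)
The plan is to extract scalar constraints on $\alpha_1, \beta_1, \gamma_1, \lambda_1$ by feeding well-chosen triples of basis vectors into the defining anti-pre-Lie identity \eqref{anti-1}, and then to observe that the putative equalities $\alpha_1 = 2$ and $\beta_1 = -2$ each force the impossible relation $0 = -2$. The key design principle is to pick triples in which the vanishing identities $e_{12}\circ e_{12} = e_{21}\circ e_{21} = 0$ from \eqref{sl2-4} kill one of the three summands of \eqref{anti-1}, so that the remaining equation is a clean scalar relation.

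For the first inequality I would evaluate \eqref{anti-1} on $(x,y,z) = (e_{12}, e_{21}, e_{12})$. The term $e_{21} \circ (e_{12} \circ e_{12})$ vanishes by \eqref{sl2-4}, while \eqref{sl2-3} gives $e_{21}\circ e_{12} = (\gamma_1 - 1) h_1$, so that applying $e_{12}\circ(\cdot)$ via \eqref{sl2-1} turns the left-hand side of \eqref{anti-1} into $(\gamma_1 - 1)(\alpha_1 - 2)\, e_{12}$. The right-hand side is $[e_{21}, e_{12}] \circ e_{12} = -h_1 \circ e_{12} = -\alpha_1 e_{12}$, by the standard ${\rm sl}_2(\C)$ bracket together with \eqref{sl2-1}. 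Equating the two sides yields $(\gamma_1 - 1)(\alpha_1 - 2) = -\alpha_1$, which is incompatible with $\alpha_1 = 2$: the left-hand side would vanish while the right-hand side would be $-2$. Hence $\alpha_1 \ne 2$.

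An entirely parallel computation with the triple $(x,y,z) = (e_{12}, e_{21}, e_{21})$ handles the second inequality. The summand $e_{12} \circ (e_{21}\circ e_{21})$ now vanishes by \eqref{sl2-4}, the value $e_{12}\circ e_{21} = \gamma_1 h_1$ from \eqref{sl2-3} combined with the formula for $e_{21}\circ h_1$ in \eqref{sl2-2} collapses the left-hand side of \eqref{anti-1} to $-\gamma_1(\beta_1 + 2)\, e_{21}$, while the right-hand side evaluates to $[e_{21},e_{12}]\circ e_{21} = -h_1\circ e_{21} = -\beta_1 e_{21}$. This produces the relation $\gamma_1(\beta_1 + 2) = \beta_1$, which is violated by $\beta_1 = -2$ (again forcing $0 = -2$), so $\beta_1 \ne -2$.

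There is no substantive obstacle in this plan; the only mildly delicate step is picking the correct triples so that only one scalar equation survives in \eqref{anti-1}, and the structure of the problem (the two chosen triples are related by the $e_{12}\leftrightarrow e_{21}$ symmetry of ${\rm sl}_2(\C)$) strongly suggests the right choice.
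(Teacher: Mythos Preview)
Your proof is correct and follows essentially the same approach as the paper: you apply \eqref{anti-1} to the triple $(e_{12},e_{21},e_{12})$ to rule out $\alpha_1=2$, and to the symmetric triple $(e_{12},e_{21},e_{21})$ to rule out $\beta_1=-2$, in each case obtaining the contradiction $0=-2$. The only cosmetic difference is that you first record the full scalar relations $(\gamma_1-1)(\alpha_1-2)=-\alpha_1$ and $\gamma_1(\beta_1+2)=\beta_1$ before specializing, whereas the paper specializes at the outset.
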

\begin{proof}
Assume that $\al_1=2$. By the following equation
$$e_{12}\circ (e_{21}\circ e_{12})-e_{21}\circ (e_{12}\circ e_{12})=[e_{21},e_{12}]\circ e_{12},$$
we deduce that $0=-2$, which is a contradiction. So $\al_1\ne 2$.
Similarly, we show that $\be_1\ne -2$.
\end{proof}

\begin{thm}\label{sl2}
There exists exactly one compatible root graded anti-pre-Lie
algebraic structure on ${\rm sl_2(\C)}$, which is the one given in
Example~{\rm \ref{ex:LB}}.
\end{thm}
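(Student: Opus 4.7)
The plan is to apply the anti-pre-Lie defining identity~\eqref{anti-1} to a handful of carefully chosen triples from $\{h_1,e_{12},e_{21}\}$ in order to force the four parameters $\al_1,\be_1,\gamma_1,\lambda_1$ in Eqs.~\eqref{sl2-1}--\eqref{sl2-4} to take a unique value, and then invoke Example~\ref{ex:LB} to confirm that the resulting product really is a compatible anti-pre-Lie structure. Note that the Jacobi condition~\eqref{anti-2} is automatic here, because the commutator is already pinned down to be the Lie bracket of ${\rm sl_2(\C)}$ by the very form of Eqs.~\eqref{sl2-1}--\eqref{sl2-4}.

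First I would feed~\eqref{anti-1} the triples $(e_{12},h_1,h_1)$ and $(e_{21},h_1,h_1)$. Expanding both sides by \eqref{sl2-1}--\eqref{sl2-4} reduces them to $(\al_1-2)(\lambda_1-\al_1-2)=0$ and $(\be_1+2)(\lambda_1-\be_1+2)=0$ respectively, and then Lemma~\ref{lemma-sl2} ($\al_1\neq 2$, $\be_1\neq -2$) lets me solve these for $\al_1=\lambda_1-2$ and $\be_1=\lambda_1+2$. Next I would apply~\eqref{anti-1} to $(e_{12},e_{21},e_{12})$ and $(e_{21},e_{12},e_{21})$, which reduce to $(\gamma_1-1)(\al_1-2)=-\al_1$ and $\gamma_1(\be_1+2)=\be_1$; substituting the previously obtained expressions turns these into the pair $\gamma_1(\lambda_1-4)=-2$ and $\gamma_1(\lambda_1+4)=\lambda_1+2$. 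Adding them yields $\lambda_1(2\gamma_1-1)=0$, and in either branch $\lambda_1=0$ or $\gamma_1=1/2$ a direct back-substitution into the second equation forces the unique solution $(\al_1,\be_1,\gamma_1,\lambda_1)=(-2,2,1/2,0)$, which coincides exactly with Example~\ref{ex:LB}.

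The main obstacle is not algebraic difficulty but the bookkeeping of which triples to select: among the $27$ triples in $\{h_1,e_{12},e_{21}\}^3$ I must pick just enough to rule out all spurious solutions without chasing through the full list. The saving point is that Lemma~\ref{lemma-sl2} a priori eliminates the exceptional values of $\al_1,\be_1$ that would collapse the leading factors of the derived equations to zero, so each of the four selected identities becomes a genuine constraint rather than a tautology. Since the resulting unique candidate is precisely the product in Example~\ref{ex:LB}, which is already verified to define a (root graded) compatible anti-pre-Lie structure on ${\rm sl_2(\C)}$, all remaining triples of~\eqref{anti-1} are automatically satisfied and both uniqueness and existence follow simultaneously.
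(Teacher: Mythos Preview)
Your proposal is correct and follows essentially the same approach as the paper's own proof: the same four instances of identity~\eqref{anti-1} are used (the triples $(e_{12},h_1,h_1)$, $(e_{21},h_1,h_1)$, $(e_{12},e_{21},e_{12})$, $(e_{21},e_{12},e_{21})$), Lemma~\ref{lemma-sl2} is invoked to cancel the factors $\al_1-2$ and $\be_1+2$, and the resulting pair of linear relations in $\gamma_1,\lambda_1$ is solved to force the unique values. The only cosmetic difference is that the paper subtracts the final two equations to obtain $4(2\gamma_1-1)=\lambda_1$ alongside $\lambda_1(2\gamma_1-1)=0$, whereas you add them and then back-substitute, but the logic is identical.
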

\begin{proof}
By Examples \ref{ex:LB} and \ref{graded-on-sl2}, there exists a
compatible root graded anti-pre-Lie algebraic structure on ${\rm
sl_2(\C)}$. We still need to prove the uniqueness. In fact, it is
sufficient to show that
$$\al_1=-2,\ \be_1=2,\  \gamma_1=\frac{1}{2},\ \lambda_1=0,$$
in Eqs.~\eqref{sl2-1}-\eqref{sl2-4}.
By definition of anti-pre-Lie algebras we have
\begin{align*}
   e_{12}\circ(h_1\circ h_1)-h_1\circ(e_{12}\circ h_1)=[h_1, e_{12}]\circ h_1,\\
   e_{21}\circ(h_1\circ h_1)-h_1\circ(e_{21}\circ h_1)=[h_1, e_{21}]\circ h_1,
\end{align*}
where $[h_1, e_{12}]=h_1\circ e_{12}-e_{12}\circ h_1=2e_{12}, [h_1, e_{21}]=h_1\circ e_{21}-e_{21}\circ h_1=-2e_{21}$.
So
\begin{align}\label{xhh-yhh-sl2}
  (\lambda_1-\al_1)(\al_1-2)=2(\al_1-2), \ \  (\lambda_1-\be_1)(\be_1+2)=-2(\be_1+2).
\end{align}
By Lemma \ref{lemma-sl2} we obtain
\begin{align}\label{al1be1-sl2}
 \al_1=\lambda_1-2, \ \ \ \be_1=\lambda_1+2.
\end{align}
By the following equations
\begin{align*}
     &e_{21}\circ(e_{12}\circ e_{12})-e_{12}\circ(e_{21}\circ e_{12})=[e_{12}, e_{21}]\circ e_{12},\\
     &e_{12}\circ(e_{21}\circ e_{21})-e_{21}\circ(e_{12}\circ e_{21})=[e_{21}, e_{12}]\circ e_{21},
\end{align*}
we deduce
\begin{align}\label{yxx-xyy-sl2}
    -(\al_1-2)(\gamma_1-1)=\al_1\ \ {\rm and\ \ } -(\be_1+2)\gamma_1=-\be_1,
\end{align}
respectively.
Thus we have
\begin{align*}
    (\lambda_1-4)(\gamma_1-1)=2-\lambda_1 \ \ {\rm and \ \ } (\lambda_1+4)\gamma_1=\lambda_1+2
\end{align*}
by Eqs.~\eqref{al1be1-sl2} and \eqref{yxx-xyy-sl2},
which imply
$$\lambda_1(2\gamma_1-1)=0 \ \ {\rm and \ \ } 4(2\gamma_1-1)=\lambda_1.$$
So
$$\al_1=-2,\ \be_1=2,\ \gamma_1=\frac{1}{2},\ \lambda_1=0.$$
This completes the proof.
\end{proof}

\section{Compatible root graded anti-pre-Lie algebraic structures on any finite-dimensional complex simple Lie algebra (except ${\rm sl_2(\C)}$) }
 We firstly construct an $(n+2)$-dimensional Lie algebra $\bb_n$ for any $n\in\Z$ with $n\geq 2$.
Then we investigate the compatible anti-pre-Lie algebraic
structures, satisfying certain conditions, on $\bb_n$ by the
representation theory of ${\rm sl_2(\C)}$. Finally, we prove that
there does not exist a compatible root graded anti-pre-Lie
algebraic structure on any finite-dimensional complex simple Lie
algebra except ${\rm sl_2(\C)}$.

For any $n\in\Z$ with $n\geq 2$, let $\bb_n=\C x\oplus\C
y\oplus\oplus_{i=1}^n\C z_i$ be an $(n+2)$-dimensional Lie algebra
satisfying the following Lie brackets.
\begin{align*}
&[z_1, x]=2x,\ [z_1, y]=-2y,\ [x, y]=z_1,\ [z_2, x]=-x, \ [z_2, y]=y,\\
&[z_1, z_2]=0,\ [z_i, \bb_n]=0, \  3\leq i\leq n.
\end{align*}
It is clear that the subalgebra $\bb$ of $\bb_n$ spanned by $x, y, z_1$ is isomorphic to the simple Lie algebra ${\rm sl_2(\C)}$ under the following map.
$$x\mapsto e_{12},\  y\mapsto e_{21},\ z_1\mapsto h_1.$$

Now suppose that $(\bb_n,\circ)$ is a compatible anti-pre-Lie
algebraic structure on $\bb_n$ satisfying the following
conditions.
\begin{align}
&x\circ y\in\oplus_{k=1}^n\C z_k,\ \ \ z_i\circ x\in\C x,\ \ \
z_i\circ y\in \C y,   \label{graded-bn-1}\\
& z_i\circ z_j\in\oplus_{k=1}^n\C z_k, \ \
\  1\leq i, j\leq n,\ \ \ x\circ x=y\circ y=0.   \label{graded-bn-2}
\end{align}
Then by Eqs.~\eqref{graded-bn-1}, \eqref{graded-bn-2} and the Lie brackets of $\bb_n$, we can write that
 \begin{align}
 & z_1\circ x=\al_1 x,\ x\circ z_1=(\al_1-2)x,\
    z_1\circ y=\be_1 y,\ y\circ z_1=(\be_1+2)y,  \label{compatible-bn-1} \\
    & z_2\circ x=\al_2 x,\ x\circ z_2=(\al_2+1)x,\
    z_2\circ y=\be_2 y,\ y\circ z_2=(\be_2-1)y,  \label{compatible-bn-2}\\
    &z_t\circ x=x\circ z_t=\al_t x,\ z_t\circ y=y\circ z_t=\be_t y,  3\leq t\leq n, \label{compatible-bn-3}\\
    &x\circ y=\sum_{1\leq l\leq n}\gamma_lz_l,\ y\circ x=(\gamma_1-1)z_1+\sum_{2\leq l\leq n}\gamma_lz_l,  \label{compatible-bn-4}\\
    &z_p\circ z_q=z_q\circ z_p=\sum_{1\leq k\leq n}\lambda_{pq}^kz_k, \  1\leq p,q\leq n, \label{compatible-bn-5}
\end{align}
where $\al_i,\be_i,\gamma_i,\lambda_{pq}^k\in\C$, $1\leq
i,p,q,k\leq n$. Note that in addition,
\begin{align}\label{compatible-bn-6}
x\circ x=y\circ y=0. 
\end{align}
Define a linear map $\rho_n:\bb_n\rightarrow
\mathfrak{g}\mathfrak{l}(\bb_n)$ by
$$z\mapsto -L_z, \ \ \  z\in \bb_n,$$
where $-L_z$ denotes the negative left multiplication operator of
$(\bb_n,\circ)$, that is, $$-L_z(z')=-z\circ z', \ \ \
z'\in\bb_n.$$ Then $\rho_n$ defines a representation of the Lie
algebra $\bb_n$ by Lemma \ref{property-1}. Furthermore, $\rho_n$
also defines a representation of $\bb$ since $\bb$ is a subalgebra
of $\bb_n$.

In the remaining parts we identify $\bb$ with ${\rm sl_2(\C)}$
with $$x=e_{12},  y=e_{21}, z_1=h_1.$$ By Lemma
\ref{representation-2}, $\bb_n$ is viewed as a direct sum of the
certain finite-dimensional irreducible representations $\V(m)$ of
$\bb$, where $m\in\N$.

\begin{lem}\label{mgeq3}
With the above assumptions and notations,
for any $m\geq 3$, $\V(m)$ is not a
$\bb$-subrepresentation of $\bb_n$.
\end{lem}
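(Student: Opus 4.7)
The plan is to exploit the fact that $\rho_n(x)=-L_x$ is a highly nilpotent operator on the whole of $\bb_n$, and to compare its global nilpotency order with that of $e_{12}$ acting on $\V(m)$. The key structural constraint is $x\circ x=0$, which forces the raising operator to act through a short three-step filtration. A dimension count alone is too weak, so the point is to extract a concrete operator-level obstruction from the equations~\eqref{compatible-bn-1}--\eqref{compatible-bn-6}.

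First I would read off the action of $\rho_n(x)$ from those equations. Namely, $\rho_n(x)(x)=-x\circ x=0$ by \eqref{compatible-bn-6}; $\rho_n(x)(y)=-x\circ y=-\sum_{l=1}^n \gamma_l z_l$ lies in $\oplus_{l=1}^n \C z_l$ by \eqref{compatible-bn-4}; and $\rho_n(x)(z_i)=-x\circ z_i$ is a scalar multiple of $x$ for each $1\le i\le n$ by \eqref{compatible-bn-1}--\eqref{compatible-bn-3}. Schematically, $\rho_n(x)$ sends $\C x$ to $0$, $\C y$ into $\oplus_{l=1}^n \C z_l$, and $\oplus_{l=1}^n \C z_l$ into $\C x$. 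Iterating once yields $\rho_n(x)^2(\bb_n)\subseteq \C x$, and iterating a second time gives $\rho_n(x)^3=0$ on all of $\bb_n$.

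Finally I would combine this with Lemma~\ref{representation-1}: on $\V(m)$ the element $e_{12}$ satisfies $e_{12}^m.v_m=m!\,v_0\neq 0$, so its nilpotency order is exactly $m+1$. If $\V(m)$ embedded as a $\bb$-subrepresentation of $\bb_n$, then under the intertwiner $\rho_n(x)$ would correspond to the action of $e_{12}$, so $\rho_n(x)^3=0$ would force $e_{12}^3=0$ on $\V(m)$ and hence $m+1\le 3$, i.e.\ $m\le 2$. For $m\ge 3$ this is a contradiction, proving the lemma. The main subtlety is simply spotting the right operator: among $\rho_n(x)$, $\rho_n(y)$, $\rho_n(z_1)$, it is the raising (and symmetrically the lowering) operator that is manifestly of low nilpotency order, whereas $\rho_n(z_1)$ carries no such explicit constraint.
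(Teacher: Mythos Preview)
Your proof is correct and follows essentially the same idea as the paper's: the paper applies $\rho_n(y)$ three times to a generic highest weight vector $\xi=\lambda_1 x+\lambda_2 y+\sum_k \lambda'_k z_k$ and observes from Eqs.~\eqref{compatible-bn-1}--\eqref{compatible-bn-6} that the result vanishes, while you carry out the symmetric computation with $\rho_n(x)$ and phrase it as the global statement $\rho_n(x)^3=0$. The two arguments are interchangeable, and your nilpotency-order formulation is a clean way to package the same obstruction.
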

\begin{proof}
    Assume that $\V(m)$ is a $\bb$-subrepresentation of $\bb_n$ with $m\geq 3$.
    Then by Lemma \ref{representation-1}, there exists a highest weight vector $(0\ne)\xi\in\V(m)\subseteq\bb_n$ of weight $m$ and  $y.(y.(y. \xi))=-y\circ(y\circ(y\circ\xi))$ is a nonzero weight vector of weight $m-6$.
    Denote $$\xi=\lambda_1x+\lambda_2y+\sum_{1\leq k\leq n}\lambda'_kz_k,$$
    where $\lambda_1,\lambda_2,\lambda_k'\in\C$ for $1\leq k\leq n$.
   It is straightforward to see that $-y\circ(y\circ(y\circ\xi))=0$ by Eqs.~\eqref{compatible-bn-1}-\eqref{compatible-bn-6}, which yields a contradiction.
   Therefore $\V(m)$ is not a $\bb$-subrepresentation of $\bb_n$ for any $m\geq 3$.
\end{proof}

By Eqs.~\eqref{compatible-bn-1} and \eqref{compatible-bn-6} we know that $x$ is a highest weight vector of weight $-\al_1$ when we regard $\bb_n$ as a representation of $\bb$.
By Lemmas \ref{representation-1} and \ref{mgeq3} we get
\begin{align}\label{012}
-\al_1\in\{0,1,2\}.
\end{align}
Similarly, we know that $y$ is a lowest weight vector of weight $-\be_1$ and
\begin{align}\label{0-1-2}
-\be_1\in\{0,-1,-2\}.
\end{align}

\begin{lem}\label{al01be2}
With the above assumptions and notations, the following
conclusions hold.
\begin{enumerate}
\item If $\al_1=0$ or $-1$, then $\be_1\ne 2$.
\label{it:a1}\item\label{it:a2} $\al_1\ne -1$. \item $\al_1\ne
0$.\label{it:a3}
\end{enumerate}
\end{lem}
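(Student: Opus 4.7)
The plan is to exploit the $\sl_2$-module structure on $\bb_n$ via $\rho_n$, in which $x$ is a highest weight vector of weight $-\al_1$ and $y$ is a lowest weight vector of weight $-\be_1$, and to convert each forbidden value of $\al_1$ into a numerical contradiction via the anti-pre-Lie identity \eqref{anti-1}. The common engine is a single consequence of \eqref{anti-1} obtained by substituting $(u,v,w)=(y,x,y)$: using $y\circ y=0$, $[x,y]=z_1$, $z_1\circ y=\be_1 y$, and expanding $x\circ y$ via \eqref{compatible-bn-4} together with the $y\circ z_l$ from \eqref{compatible-bn-1}--\eqref{compatible-bn-3}, it collapses to
\[
\gamma_1(\be_1+2)+\gamma_2(\be_2-1)+\sum_{l\ge 3}\gamma_l\be_l=\be_1. \qquad(\star)
\]

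For Part~(3), I would assume $\al_1=0$. Then $x$ is a highest weight vector of weight $0$, which by Lemmas~\ref{representation-1}, \ref{representation-2} and \ref{mgeq3} can only occur in $\V(0)$, forcing $e_{21}.x=-y\circ x=0$; reading off coefficients from \eqref{compatible-bn-4} gives $\gamma_1=1$ and $\gamma_l=0$ for $l\ge 2$, so $(\star)$ becomes $\be_1+2=\be_1$, a contradiction. For Part~(2), I would assume $\al_1=-1$. Then $x$ is a highest weight vector of weight $1$, so it sits in $\V(1)$ and $e_{21}^{\,2}.x=0$, i.e., $y\circ(y\circ x)=0$; the same expansion applied to the coefficients of $y\circ x$ yields
\[
(\gamma_1-1)(\be_1+2)+\gamma_2(\be_2-1)+\sum_{l\ge 3}\gamma_l\be_l=0,
\]
and subtracting this from $(\star)$ leaves $\be_1+2=\be_1$, again a contradiction. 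Part~(1) is then immediate, as its hypothesis $\al_1\in\{0,-1\}$ is excluded by Parts~(2) and~(3); alternatively, it can be read off as the $\be_1=2$ restriction of the two computations above.

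The main obstacle is really the first step: identifying the single triple in \eqref{anti-1} that reduces it to the clean relation $(\star)$, purged of the structure constants $\lambda^{k}_{pq}$ and all $\al_l$'s. Once $(\star)$ is at hand, Lemmas~\ref{representation-1}--\ref{mgeq3} supply the auxiliary vanishing relations ($y\circ x=0$ for Part~(3), $y\circ(y\circ x)=0$ for Part~(2)) essentially for free, and both contradictions fall out by direct subtraction.
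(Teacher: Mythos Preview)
Your argument is correct, and it takes a genuinely different route from the paper's.

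The paper proves the three parts in the stated order and relies on Part~(\ref{it:a1}) as an input to Parts~(\ref{it:a2}) and~(\ref{it:a3}): having $\be_1\ne 2$ ensures that $y$ lies in a copy of $\V(0)$ or $\V(1)$, whence $x.(x.y)=x\circ(x\circ y)=0$, and this ``$x$-side'' vanishing is combined with a second relation coming from $x\circ(y\circ x)$ (in Part~(\ref{it:a2})) or from $y\circ x=0$ (in Part~(\ref{it:a3})) to reach a contradiction. In other words, the paper's two key relations involve the $\al_l$'s via $x\circ z_l$, and it needs control over $\be_1$ to justify one of them.

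Your approach is the mirror image: the single identity~$(\star)$, obtained from \eqref{anti-1} with $(y,x,y)$, involves only the $\be_l$'s via $y\circ z_l$, and it holds unconditionally. The auxiliary vanishing relations ($y\circ x=0$ when $\al_1=0$; $y\circ(y\circ x)=0$ when $\al_1=-1$) follow directly from the assumed value of $\al_1$ and Lemma~\ref{mgeq3}, with no hypothesis on $\be_1$ required. Subtraction then yields $\be_1+2=\be_1$ in both cases. This makes Parts~(\ref{it:a2}) and~(\ref{it:a3}) self-contained, and Part~(\ref{it:a1}) becomes vacuously true. Your organization is somewhat more economical, since it avoids the separate argument for Part~(\ref{it:a1}) and the logical dependence of the later parts on it; the paper's ordering, on the other hand, gives Part~(\ref{it:a1}) independent content and may be viewed as more informative about the interplay between $\al_1$ and $\be_1$.
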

\begin{proof}
(\ref{it:a1}). Assume that $\be_1=2$.  Then $y$ is a lowest weight
vector of weight $-2$. By Lemma \ref{representation-1} we get that
$(0\ne)x.(x.y)=x\circ(x\circ y)$ is a highest weight vector of
weight $2$. Thus we get that $x$ is a highest weight vector of
weight $2$ by
Eqs.~\eqref{compatible-bn-1}-\eqref{compatible-bn-4}, which
implies that $\al_1=-2$, which is a contradiction.

 (\ref{it:a2}).   Assume that $\al_1=-1$. Then $x$ is a highest weight vector of weight $1$.
Thus $\C x\oplus\C (y\circ x)$ is isomorphic to $\V(1)$ as
$\bb$-representations. Suppose that $f:\C x\oplus\C (y\circ
x)\rightarrow\V(1)$ is a $\bb$-representation isomorphism. Then we
 assume that $f(x)=\lambda_0 v_0$ by Lemma \ref{representation-1},
where $\lambda_0\in\C^*$. Hence we have
\begin{align*}
    \lambda_0v_0=x.(y.f(x))&=f(x\circ(y\circ x))\\
    &=((\gamma_1-1)(\al_1-2)+\gamma_2(\al_2+1)+\sum_{3\leq l\leq n}\gamma_l\al_l)\lambda_0v_0,
\end{align*}
which implies
\begin{align}\label{xyx}
     (\gamma_1-1)(\al_1-2)+\gamma_2(\al_2+1)+\sum_{3\leq l\leq n}\gamma_l\al_l=1.
\end{align}
Moreover, by Item~(\ref{it:a1}) we show that $y$ is a lowest
weight vector of weight $0$ or $-1$. Thus by Lemma
\ref{representation-1} again, we have
 \begin{align*}
     0=x.(x.y)=x\circ(x\circ y)=(\gamma_1(\al_1-2)+\gamma_2(\al_2+1)+\sum_{3\leq l\leq n}\gamma_l\al_l)x,
 \end{align*}
which yields
\begin{align}\label{xxy}
    \gamma_1(\al_1-2)+\gamma_2(\al_2+1)+\sum_{3\leq l\leq n}\gamma_l\al_l=0.
\end{align}
By Eqs.~\eqref{xyx}, \eqref{xxy} and the assumption that
$\al_1=-1$, we have $3=1$, which is a contradiction.

(\ref{it:a3}). Assume that $\al_1=0$. Then $x$ is a highest weight
vector of weight $0$. So $\C x$ is a trivial representation of
$\bb$ by Lemma \ref{representation-1}. By Item~(\ref{it:a1}),
$\be_1\ne 2$. Therefore we have
$$0=-y.x=y\circ x=(\gamma_1-1)z_1+\sum_{2\leq l\leq n}\gamma_l
z_l.$$ Hence $\gamma_1=1, \gamma_l=0, 2\leq l\leq n$. On the other
hand, we have $$0=x.(x. y)=x\circ(x\circ
y)=(\gamma_1(\al_1-2)+\gamma_2(\al_2+1)+\sum_{3\leq l\leq
n}\gamma_l\al_l)x=-2x,$$
 which is a contradiction.
\end{proof}

\begin{cor}\label{al1=-2}
With the above assumptions and notations, $\al_1=-2$ and $x$
is a highest weight vector of weight $2$ when we view $\bb_n$ as a
representation of $\bb$. Similarly, $\be_1=2$ and $y$ is a lowest
weight vector of weight $-2$  when we view $\bb_n$ as a
representation of $\bb$.
\end{cor}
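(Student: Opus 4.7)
The plan is to package the preceding lemmas into the two statements of the corollary. First, I would observe that the identification $\rho_n(z) = -L_z$ translates the structural identity $x \circ x = 0$ (from Eq.~\eqref{compatible-bn-6}) into $e_{12}.x = 0$, and the scalar equation $z_1 \circ x = \al_1 x$ (from Eq.~\eqref{compatible-bn-1}) into $h_1.x = -\al_1 x$. Hence $x$ is automatically a highest weight vector of $h_1$-weight $-\al_1$ when $\bb_n$ is regarded as a $\bb$-representation via $\rho_n$. Combining Eq.~\eqref{012} (which restricts $-\al_1 \in \{0,1,2\}$, via Lemmas \ref{representation-1}, \ref{representation-2} and \ref{mgeq3}) with Lemma \ref{al01be2}(ii)–(iii) (which rule out $\al_1 = -1$ and $\al_1 = 0$ respectively) forces $\al_1 = -2$, and then $x$ has $h_1$-weight $2$; this yields the first half of the corollary.

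For the second half I would argue symmetrically on $y$. The identity $y \circ y = 0$ gives $e_{21}.y = 0$ and $z_1 \circ y = \be_1 y$ gives $h_1.y = -\be_1 y$, so $y$ is a lowest weight vector of weight $-\be_1$, and Eq.~\eqref{0-1-2} then forces $\be_1 \in \{0,1,2\}$. To pin $\be_1 = 2$, one has to rule out $\be_1 = 0$ and $\be_1 = 1$; I would do this by re-running the proofs of Lemma \ref{al01be2}(iii) and (ii) respectively with $x$ and $y$ interchanged, now exploiting the already-established value $\al_1 = -2$. For $\be_1 = 0$: triviality of $\C y$ as a $\bb$-subrepresentation forces $x \circ y = 0$, whence all $\gamma_l = 0$; pairing this with the $\V(2)$-subrepresentation generated by $x$ and evaluating $y \circ (y \circ x)$ two ways (directly via the structure constants versus via the $e_{21}^2$-action on $v_0 = x$ in $\V(2)$) produces a contradiction on the weight of the resulting nonzero vector. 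For $\be_1 = 1$: $\C y \oplus \C(x \circ y)$ is a copy of $\V(1)$, and comparing $x \circ (y \circ x)$ computed via the structure constants (with $\al_1 = -2$ substituted) to its value in the $\V(2)$-model, where $e_{12}.(e_{21}.x) = 2x$ by Lemma \ref{representation-1}, yields a numerical contradiction.

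The main obstacle is not conceptual but a matter of careful sign bookkeeping: the representation $\rho_n = -L_z$ flips weights, so $z_1 \circ v = \mu v$ means $v$ has $h_1$-weight $-\mu$, and one must consistently match this convention when invoking highest/lowest weight language or when transcribing identities such as $x \circ (y \circ x) = x.(y.x)$ into the formulas of Lemma \ref{representation-1}. A secondary subtlety is that the structure constants involving $z_2$ are not symmetric under $x \leftrightarrow y$ (compare Eqs.~\eqref{compatible-bn-1} and \eqref{compatible-bn-2}), so the $y$-side exclusions require an explicit re-computation rather than an appeal to a formal symmetry of the anti-pre-Lie algebra $(\bb_n, \circ)$.
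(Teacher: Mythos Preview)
Your handling of $\al_1$ is exactly the paper's one-line argument: Eq.~\eqref{012} plus Lemma~\ref{al01be2}(\ref{it:a2}),(\ref{it:a3}). The paper's entire proof is just that citation, together with ``Similarly'' for $\be_1$; so for the $y$-side the paper is implicitly asking the reader to mirror the lemma, which is precisely what you propose to do.

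Your $\be_1=0$ case is fine: with $\al_1=-2$ the vector $y\circ(y\circ x)=e_{21}^2.x$ is the nonzero weight $-2$ vector in the $\V(2)$ generated by $x$, while by the structure constants it is a scalar multiple of $y$, and $y$ has weight $0$; contradiction. Note, though, that this same observation already disposes of $\be_1=1$ as well (since then $y$ would have weight $-1$, again impossible), so no separate case analysis is needed. This is the clean content behind the paper's ``Similarly'': it is really the mirror of Lemma~\ref{al01be2}(\ref{it:a1}), whose contrapositive reads ``if $\al_1=-2$ then $\be_1\notin\{0,1\}$''.

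Your description of the $\be_1=1$ case, as written, does not quite close. Computing $x\circ(y\circ x)$ via the structure constants and via the $\V(2)$-model gives the \emph{same} number $2$ (it is the same vector computed two ways), so no contradiction arises from that comparison alone. What you need to pair it with is the equation coming from your $\V(1)$: since $x\circ y$ is the highest weight vector there, $x\circ(x\circ y)=0$, i.e.\ $-4\gamma_1+\gamma_2(\al_2+1)+\sum_{l\ge3}\gamma_l\al_l=0$. Subtracting this from $-4(\gamma_1-1)+\gamma_2(\al_2+1)+\sum_{l\ge3}\gamma_l\al_l=2$ then gives $4=2$. You mention $\V(1)$ but never extract this second equation, so the argument as stated has a small gap. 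Either supply that equation, or (better) drop the case split and use the weight-mismatch argument above, which handles $\be_1\in\{0,1\}$ simultaneously once $\al_1=-2$ is known.
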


\begin{proof}
It follows from Lemma~\ref{al01be2} (\ref{it:a2}), (\ref{it:a3})
and Eqs.~\eqref{012}, \eqref{0-1-2}.
\end{proof}

\begin{lem}\label{weight2}
Let $\bb_n$ be viewed as a representation of $\bb$. Then the
 following  conclusions hold.
\begin{enumerate}
\item \label{it:b1} $x$ is the unique highest weight vector of
weight $2$ up to a nonzero scalar
 and $y$ is the unique lowest weight vector of weight $-2$ up to a nonzero scalar.
\item \label{it:b2} There does not exist a highest weight vector
of weight $1$.
\end{enumerate}
\end{lem}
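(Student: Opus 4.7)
The plan is to view $\bb_n$ as a $\bb\cong{\rm sl}_2(\C)$-representation via $\rho_n$ and run a weight-grading argument. By Corollary~\ref{al1=-2}, $x$ lies in the weight-$2$ space and $y$ in the weight-$(-2)$ space; combining Lemma~\ref{mgeq3} with Lemma~\ref{representation-2} shows that $\bb_n$ is a direct sum of irreducibles $\V(m_j)$ with $m_j\in\{0,1,2\}$, so the set of $\rho_n$-weights of $\bb_n$ is contained in $\{-2,-1,0,1,2\}$. Denote the $\rho_n$-weight-$\lambda$ subspace of $\bb_n$ by $(\bb_n)_\lambda$. Note that $\C x$, $\C y$, and $\mathrm{span}(z_1,\dots,z_n)$ are each $\rho_n(z_1)$-invariant (the last by~\eqref{compatible-bn-5}), so the weight decomposition is compatible with this triple direct sum.

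For item~(\ref{it:b1}), since $(\bb_n)_4=0$, every weight-$2$ vector is automatically killed by $\rho_n(x)$ and hence is a highest weight vector; it thus suffices to show $(\bb_n)_2=\C x$. Any weight-$2$ vector has the form $cx+w$ with $w\in(\bb_n)_2\cap\mathrm{span}(z_i)$. The crucial clash of gradings is that~\eqref{graded-bn-1} forces $y\circ w\in\C y\subseteq(\bb_n)_{-2}$, while $\rho_n(y)(w)=-y\circ w$ must lie in $(\bb_n)_0$; hence $y\circ w=0$. Projecting $w$ onto the irreducible summands of $\bb_n$, each nonzero component would be a weight-$2$ vector in some $\V(m_j)$ killed by $\rho_n(y)$, i.e., a lowest weight vector with $-m_j=2$, which is impossible. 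So $w=0$, yielding $(\bb_n)_2=\C x$; the symmetric argument with $x$ and $y$ interchanged (using $\rho_n(x)$ and $\C x\subseteq(\bb_n)_2$) gives $(\bb_n)_{-2}=\C y$.

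For item~(\ref{it:b2}), any weight-$1$ vector $v$ must lie in $\mathrm{span}(z_i)$ since $\C x$ and $\C y$ occupy weights $\pm 2$. The same mechanism applies: $y\circ v\in\C y\subseteq(\bb_n)_{-2}$ whereas $\rho_n(y)(v)\in(\bb_n)_{-1}$, forcing $y\circ v=0$; but a weight-$1$ vector annihilated by $\rho_n(y)$ in any $\V(m_j)$ would be a lowest weight vector with $-m_j=1$, which is impossible. Hence $(\bb_n)_1=0$, and in particular no highest weight vector of weight~$1$ exists. I do not foresee any significant obstacle: the proof is essentially a clean confrontation between the $\rho_n$-weight grading (capped at $\pm 2$ via Lemma~\ref{mgeq3}) and the $\circ$-grading hypothesis (which confines $y\circ(\cdot)$ to the weight-$(-2)$ line $\C y$).
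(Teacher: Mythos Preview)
Your proof is correct and follows essentially the same approach as the paper's: both arguments exploit the clash between the constraint $y\circ w\in\C y$ for $w\in\mathrm{span}(z_i)$ (coming from Eqs.~\eqref{compatible-bn-1}--\eqref{compatible-bn-3}) and the $\rho_n$-weight that $y\circ w$ must carry. The only cosmetic difference is the direction in which the contradiction is run---the paper argues that $y\circ x'$ is \emph{nonzero} (since $x'$ generates a copy of $\V(2)$ or $\V(1)$) and hence $y$ itself would acquire weight $0$ or $-1$, contradicting Corollary~\ref{al1=-2}; you instead deduce $y\circ w=0$ from the weight clash and then note that $w$ would be a lowest weight vector of positive weight, which is impossible.
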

\begin{proof}
(\ref{it:b1}). Assume that $x'=\sum_{1\leq l\leq n}\mu_lz_l$ is a
highest weight vector of weight $2$. Then we show that $y\circ x'$
is a nonzero weight vector of weight $0$  by Lemma
\ref{representation-1}. Thus $y$ is a weight vector of weight $0$
by Eqs.~\eqref{compatible-bn-1}-\eqref{compatible-bn-3}, which
contradicts Corollary \ref{al1=-2}. Hence $x$ is the unique
highest weight vector of weight $2$ up to a nonzero scalar.
Similarly,  we show that $y$ is the unique lowest weight vector of
weight $-2$ up to a nonzero scalar.

(\ref{it:b2}). Assume that $x'=\sum_{1\leq l\leq n}\mu_lz_l$ is a
nonzero highest weight vector of weight $1$. Then we deduce that $y\circ
x'$ is a nonzero weight vector of weight $-1$ by Lemma
\ref{representation-1}. So $y$ is a weight vector of weight $-1$
by Eqs.~\eqref{compatible-bn-1}-\eqref{compatible-bn-3}, which
contradicts Corollary \ref{al1=-2}.
\end{proof}

\begin{pro}\label{V2(n-1)V0}
Suppose that $(\bb_n, \circ)$ is a compatible anti-pre-Lie
algebraic structure on $\bb_n$ satisfying Eqs.~\eqref{graded-bn-1} and \eqref{graded-bn-2}.
Then the following equations hold.
\begin{align}
   & z_1\circ x=-2 x,\ x\circ z_1=-4x,\
    z_1\circ y=2 y,\ y\circ z_1=4y,   \label{V2(n-1)V0-1} \\
    & z_2\circ x=x,\ x\circ z_2=2x,\
    z_2\circ y=-y,\ y\circ z_2=-2y, \label{V2(n-1)V0-2} \\
    &x\circ y=\frac{1}{2}z_1,\ y\circ x=-\frac{1}{2}z_1,  \label{V2(n-1)V0-3} \\
    &z_1\circ z_i=z_i\circ z_1=0, \ \ 1\leq i\leq n, \label{V2(n-1)V0-4}\\
     &z_j\circ x=x\circ z_j=z_j\circ y=y\circ z_j=0,\ \ 3\leq j\leq n. \label{V2(n-1)V0-5}
\end{align}
\end{pro}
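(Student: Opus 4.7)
The plan is to exploit the $\sl_2(\C)$-representation structure of $\bb_n$ under $\rho_n$ together with a few targeted instances of the anti-pre-Lie defining identity \eqref{anti-1}. By Lemma~\ref{mgeq3} no $\V(m)$ with $m\geq 3$ occurs as a $\bb$-subrepresentation of $\bb_n$, and by Lemma~\ref{weight2}\eqref{it:b2} no copy of $\V(1)$ appears either (its highest weight vector would have weight~$1$). Hence Lemma~\ref{representation-2} yields $\bb_n \cong \V(2)^{\oplus a} \oplus \V(0)^{\oplus b}$ for some $a,b\in\N$; using Corollary~\ref{al1=-2} together with Lemma~\ref{weight2}\eqref{it:b1} to identify $\C x$ and $\C y$ as the weight-$2$ and weight-$(-2)$ spaces, and then counting dimensions, we conclude $a=1$ and $b=n-1$. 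Consequently the weight-zero space of $\rho_n$ is exactly $\oplus_{i=1}^{n}\C z_i$, so $-z_1\circ z_i=0$, i.e.\ $z_1\circ z_i=0$, for every $i$; combined with $[z_1,z_i]=0$ this also forces $z_i\circ z_1=0$. Together with $\al_1=-2$ and $\be_1=2$ from Corollary~\ref{al1=-2}, this proves \eqref{V2(n-1)V0-1} and \eqref{V2(n-1)V0-4}.

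To determine the constants $\al_i,\be_i$ for $i\geq 2$, I would apply \eqref{anti-1} to the triples $(z_i,y,z_1)$ and $(z_i,x,z_1)$. The first reads
\begin{equation*}
z_i\circ(y\circ z_1)-y\circ(z_i\circ z_1)=[y,z_i]\circ z_1,
\end{equation*}
and substituting $y\circ z_1=4y$, $z_i\circ z_1=0$, $z_i\circ y=\be_i y$, together with the Lie brackets $[y,z_2]=-y$ and $[y,z_j]=0$ for $j\geq 3$, yields $4\be_2=-4$ and $4\be_j=0$, i.e.\ $\be_2=-1$ and $\be_j=0$ for $j\geq 3$. The symmetric identity with $(z_i,x,z_1)$ gives $\al_2=1$ and $\al_j=0$ for $j\geq 3$, which is exactly \eqref{V2(n-1)V0-2} and \eqref{V2(n-1)V0-5}.

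Finally I would extract the remaining coefficients $\gamma_l$ from \eqref{anti-1} applied to $(x,y,z_2)$:
\begin{equation*}
x\circ(y\circ z_2)-y\circ(x\circ z_2)=[y,x]\circ z_2=-z_1\circ z_2=0.
\end{equation*}
Inserting the just-derived values $y\circ z_2=-2y$ and $x\circ z_2=2x$, expanding $x\circ y$ and $y\circ x$ via \eqref{compatible-bn-4}, and equating the coefficients of each $z_k$ in $\oplus_{i=1}^{n}\C z_i$, one obtains $\gamma_1=\tfrac12$ and $\gamma_l=0$ for $l\geq 2$, establishing \eqref{V2(n-1)V0-3}. The conceptual crux is the representation-theoretic decomposition of the first paragraph, which directly forces $z_1\circ z_i=0$ for every $i$; the main obstacle is then choosing triples in which two of the three entries have already-known multiplications, so that each application of \eqref{anti-1} isolates one unknown at a time.
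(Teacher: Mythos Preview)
Your argument is correct and rests on the same core fact as the paper's proof—the decomposition $\bb_n\cong\V(2)\oplus\V(0)^{\oplus(n-1)}$ as a $\bb$-module—but you extract the consequences differently. The paper constructs an explicit isomorphism $f:\bb_n\to\V(2)\oplus\bigoplus_{d}\V^d(0)$, normalizes $f(x)=v_0$ and $f(y)=\lambda v_2$, computes $\lambda=-1$ and $f(z_1)=2v_1$, and then reads off first $\gamma_1=\tfrac12$, $\gamma_l=0$ ($l\ge 2$) and afterwards $z_1\circ z_i=0$ directly from the images under $f$; only at the very end does it apply \eqref{anti-1} to the triples $(z_i,x,z_1)$, $(z_i,y,z_1)$ to get the $\al_i,\be_i$. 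You instead argue intrinsically: once $a=1$ is known, the $n$-dimensional subspace $\bigoplus_i\C z_i$ (which is $-L_{z_1}$-invariant by \eqref{compatible-bn-5}) must coincide with the weight-zero space, giving $z_1\circ z_i=0$ immediately; you then use \eqref{anti-1} not only for the $\al_i,\be_i$ (as in the paper) but also, via the triple $(x,y,z_2)$, for the $\gamma_l$. Your route is shorter and avoids tracking a concrete isomorphism; the paper's route makes the single $\V(2)$ summand fully explicit. The only point you might spell out more carefully is the sentence ``consequently the weight-zero space is exactly $\bigoplus_i\C z_i$'': this uses that $-L_{z_1}$ is diagonalizable (from the decomposition), preserves $\bigoplus_i\C z_i$, and has its $\pm2$-eigenspaces already exhausted by $\C x$ and $\C y$.
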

\begin{proof}
By assumption,
Eqs.~\eqref{compatible-bn-1}-\eqref{compatible-bn-6} hold. Thus by
Corollary \ref{al1=-2}, we have $\al_1=-2,\be_1=2$. So
Eq.~\eqref{V2(n-1)V0-1} holds.
 Moreover, by Lemmas \ref{representation-1}, \ref{representation-2}, \ref{mgeq3} and \ref{weight2}, we deduce that  $\bb_n$ is isomorphic to $\V(2)\oplus \oplus_{1\leq d\leq n-1}\V^d(0)$ as
 representations of $\bb$, where $\V^d(0)=\C v^d_0$ is the one-dimensional trivial representation of $\bb$, $1\leq d\leq n-1$.
    Let $$f:\bb_n\rightarrow \V(2)\oplus \oplus_{1\leq d\leq n-1}\V^d(0),$$ be the $\bb$-representation isomorphism.
    By Lemma \ref{weight2}, without loss of generality, we assume that $$f(x)=v_0\in\V(2),\ \ \ f(y)=\lambda v_2\in\V(2),$$ where $\lambda\in\C^*$.
    Then we have
    \begin{align}
        &v_1=y.v_0=y.f(x)=-f(y\circ x)=-f((\gamma_1-1)z_1+\sum_{2\leq l\leq n}\gamma_lz_l),\label{yv0}\\
        &\lambda v_1=\lambda x.v_2=x.f(y)=-f(x\circ y)=-f(\gamma_1z_1+\sum_{2\leq l\leq n}\gamma_lz_l).\label{xv2}
    \end{align}
    So $f(z_1)=(1-\lambda)v_1$ which yields $\lambda\ne 1$ since $f$ is an isomorphism.
    Thus we get
    \begin{align*}
        2(1-\lambda)v_2=(1-\lambda)y.v_1=y.f(z_1)=-f(y\circ z_1)=-\lambda(\be_1+2)v_2.
    \end{align*}
    Therefore $\lambda=-1$ since $\be_1=2$ and $f(z_1)=2v_1, f(y)=-v_2$.
    Then by Eq.~\eqref{yv0} we get
    \begin{align*}
       f(z_1)=2v_1=-2f((\gamma_1-1)z_1+\sum_{2\leq l\leq n}\gamma_lz_l),
    \end{align*}
    which yields $$z_1=-2(\gamma_1-1)z_1-2\sum_{2\leq l\leq n}\gamma_lz_l,$$
    since $f$ is an isomorphism.
    Thus $\gamma_1=\frac{1}{2}, \gamma_l=0$, $2\leq l\leq n$. So Eq.~\eqref{V2(n-1)V0-3} holds.
 Furthermore,  we have $$0=2z_1.v_1=z_1.f(z_1)=-f(z_1\circ z_1),$$
    which implies
    \begin{align}\label{z1z1}
    z_1\circ z_1=0,
    \end{align}
     since $f$ is an isomorphism.
Assume that for any $2\leq j\leq n$,
$$f(z_j)=\lambda_{j0}v_0+\lambda_{j1}v_1+\lambda_{j2}v_2+\sum_{1\leq d\leq
n-1}\mu_{jd}v_0^d,$$
    where $\lambda_{j0},\lambda_{j1},\lambda_{j2},\mu_{jd}\in\C$, $1\leq d\leq n-1$.
    Then we have
    \begin{align*}
        &2\lambda_{21}v_0+\lambda_{22}v_1=x.f(z_2)=-f(x\circ z_2)=-(\al_2+1)v_0,\\
        &\lambda_{20}v_1+2\lambda_{21}v_2=y.f(z_2)=-f(y\circ z_2)=(\be_2-1)v_2,\\
          &2\lambda_{j1}v_0+\lambda_{j2}v_1=x.f(z_j)=-f(x\circ z_j)=-\al_jv_0,\ \ \ 3\leq j\leq n,\\
        &\lambda_{j0}v_1+2\lambda_{j1}v_2=y.f(z_j)=-f(y\circ z_j)=\be_jv_2, \ \ \ 3\leq j\leq n.
    \end{align*}
Thus
\begin{align*}
&\lambda_{20}=\lambda_{22}=0,  \  2\lambda_{21}=-(\al_2+1)=(\be_2-1),  \\
&\lambda_{j0}=\lambda_{j2}=0,  \  2\lambda_{j1}=-\al_j=\be_j, \  3\leq j\leq n. 
\end{align*}
  So
$$f(z_j)=\lambda_{j1}v_1+\sum_{1\leq d\leq n-1}\mu_{jd}v_0^d,\ \ \ 2\leq j\leq n.$$
    Then
  $$ 0=z_1.f(z_j)=-f(z_1\circ z_j),$$
    which implies
        \begin{align}\label{z1z2}
    z_1\circ z_j=0, \ \ \ 2\leq j\leq n,
    \end{align} since $f$ is an isomorphism.
   By Eqs.~\eqref{z1z1} and \eqref{z1z2}, Eq.~\eqref{V2(n-1)V0-4} holds.
Finally, we have
    \begin{align*}
        &0=z_2\circ(x\circ z_1)-x\circ(z_2\circ z_1)-[x, z_2]\circ z_1
        =-4z_2\circ x+4x=(-4\al_2+4)x,\\
        &0=z_2\circ(y\circ z_1)-y\circ(z_2\circ z_1)-[y, z_2]\circ z_1
        =4z_2\circ y+4y=(4\be_2+4)y,\\
        &0=z_j\circ(x\circ z_1)-x\circ(z_j\circ z_1)-[x, z_j]\circ z_1
        =-4z_j\circ x=-4\al_jx,\ \ \ 3\leq j\leq n,\\
        &0=z_j\circ(y\circ z_1)-y\circ(z_j\circ z_1)-[y, z_j]\circ z_1
        =4z_j\circ y=4\be_j y, \ \ \ 3\leq j\leq n.
    \end{align*}
  So $\al_2=1, \be_2=-1, \al_j=\be_j=0$ for $3\leq j\leq n$. Thus Eqs.~\eqref{V2(n-1)V0-2} and \eqref{V2(n-1)V0-5} hold.
   This completes the proof of the conclusion.
\end{proof}

\begin{thm}\label{main}
Let $n\in\Z$ with $n\geq 2$ and $\gg$ be a Lie algebra with an
$n$-dimensional abelian subalgebra $\hh=\oplus_{i=1}^n\C z'_i$.
Suppose that there exist $x_1, y_1, x_2, y_2, x_3, y_3\in\gg$ such
that the following conditions hold.
\begin{enumerate}
\item $\gg_1=\C x_1\oplus \C y_1\oplus\hh$ is a subalgebra of
$\gg$ and $\gg_1$ is isomorphic to $\bb_n$ as Lie algebras.
Moreover, there exists a Lie algebra isomorphism $T_1: \gg_1\rightarrow \bb_n$
such that
$$T_1(x_1)= x,\ \ \ T_1(y_1)= y, \ \ \ T_1(z'_1)= z_1, \ \ \ T_1(z'_2)=z_2.$$
\item $\gg_2=\C x_2\oplus \C y_2\oplus\hh$ is a subalgebra of
$\gg$ and $\gg_2$ is isomorphic to $\bb_n$ as Lie algebras.
Moreover, there exists a Lie algebra isomorphism $T_2: \gg_2\rightarrow \bb_n$
such that
$$T_2(x_2)=x,\ \ \ T_2(y_2)=y, \ \ \ T_2(z'_2)=z_1, \ \ \ T_2(z'_1)=z_2.$$
 \item $\gg_3=\C x_3\oplus \C y_3\oplus\hh$
is a subalgebra of $\gg$ and $\gg_3$ is isomorphic to $\bb_n$ as Lie algebras.
Moreover, there exists a Lie algebra isomorphism $T_3: \gg_3\rightarrow \bb_n$
such that
$$T_3(x_3)=x,\ \ \ T_3(y_3)=y, \ \ \ T_3(z'_1+z'_2)= z_1.$$
\item $[x_1, x_2]=x_3$.
\end{enumerate}
Then there does not exist a compatible anti-pre-Lie algebraic
structure $(\gg, \circ)$ on $\gg$ satisfying the following
conditions.
\begin{align}
&x_k\circ y_k\in \hh,\ \ \   z'_i\circ x_k\in\C x_k,\ \ \ z'_i\circ
y_k\in \C y_k,  \label{graded-gg-1}\\
& z'_i\circ z'_j\in\hh,\ \ \  x_k\circ x_k=y_k\circ y_k=0, \ \ \   1\leq i, j\leq
n, 1\leq k\leq 3.\label{graded-gg-2}
\end{align}
\end{thm}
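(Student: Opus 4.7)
The plan is to apply Proposition~\ref{V2(n-1)V0} to each of the three subalgebras $\gg_1,\gg_2,\gg_3$ via the Lie isomorphisms $T_1,T_2,T_3$, and then to use the relation $[x_1,x_2]=x_3$ together with the defining identity~\eqref{anti-1} to derive a contradiction from the assumption that a compatible anti-pre-Lie algebraic structure $(\gg,\circ)$ satisfying~\eqref{graded-gg-1}--\eqref{graded-gg-2} exists. First, I would verify that each $\gg_k$ is closed under $\circ$ (which follows from~\eqref{graded-gg-1}--\eqref{graded-gg-2} together with the Lie bracket relations in $\gg_k\cong\bb_n$), and that after transport by $T_k$ the restricted structure on $\bb_n$ satisfies the hypotheses~\eqref{graded-bn-1}--\eqref{graded-bn-2} of Proposition~\ref{V2(n-1)V0}. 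The only nontrivial point here is that $T_k(\hh)=\oplus_{i=1}^n\C z_i$, which follows from the observation that any $n$-dimensional abelian subalgebra of $\bb_n$ containing $z_1$ must lie in its centralizer $\oplus_{i=1}^n\C z_i$ and therefore equals it.

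The three applications of Proposition~\ref{V2(n-1)V0}, translated back into $\gg$, yield in particular
\begin{align*}
& x_1\circ z_1'=-4x_1,\qquad x_1\circ z_2'=2x_1, \\
& x_2\circ z_1'=2x_2,\qquad x_2\circ z_2'=-4x_2, \\
& x_3\circ(z_1'+z_2')=-4x_3.
\end{align*}
The last equation uses only $T_3(z_1'+z_2')=z_1$; the individual values $T_3(z_1')$ and $T_3(z_2')$ are not pinned down by the hypotheses, so $x_3\circ z_1'$ and $x_3\circ z_2'$ cannot be read off separately.

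Next, applying~\eqref{anti-1} with $(x,y,z)=(x_1,x_2,z_1')$ and then with $(x,y,z)=(x_1,x_2,z_2')$, and using $[x_2,x_1]=-x_3$, yields
\begin{align*}
-x_3\circ z_1'&=2(x_1\circ x_2)+4(x_2\circ x_1),\\
-x_3\circ z_2'&=-4(x_1\circ x_2)-2(x_2\circ x_1).
\end{align*}
Summing these two equations causes the unknown cross-products to combine: the right-hand sides collapse to $-2(x_1\circ x_2-x_2\circ x_1)=-2x_3$, whence $x_3\circ(z_1'+z_2')=2x_3$. Comparing with the value $-4x_3$ from the third application of Proposition~\ref{V2(n-1)V0} forces $6x_3=0$, contradicting $x_3\neq 0$.

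The main obstacle to anticipate is that $T_3$ is not uniquely determined on $z_1'$ and $z_2'$ individually; the crux of the proof is that pairing the anti-pre-Lie identity with $w=z_1'$ and with $w=z_2'$ and summing produces the precise combination $x_1\circ x_2-x_2\circ x_1=x_3$, thereby sidestepping any need to analyze these two unknown products separately.
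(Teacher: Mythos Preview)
Your proposal is correct and follows essentially the same approach as the paper: apply Proposition~\ref{V2(n-1)V0} to each $\gg_k$ to obtain the values of $x_k\circ z_i'$, then use the anti-pre-Lie identity~\eqref{anti-1} together with $[x_1,x_2]=x_3$ to derive $x_3\circ(z_1'+z_2')=2x_3$, contradicting $x_3\circ(z_1'+z_2')=-4x_3$. Your version is in fact slightly more careful than the paper's in justifying why each $(\gg_k,\circ)$ transports to a structure on $\bb_n$ satisfying~\eqref{graded-bn-1}--\eqref{graded-bn-2} (in particular your observation that $T_k(\hh)=\oplus_i\C z_i$ via the centralizer of $z_1$), whereas the paper simply asserts this; the final computation is identical up to whether one sums before or after applying~\eqref{anti-1}.
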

\begin{proof}
Assume that $(\gg, \circ)$ is a compatible anti-pre-Lie algebraic
structure on $\gg$ satisfying Eq.~\eqref{graded-gg-1} and
\eqref{graded-gg-2}. Then $(\gg_1, \circ), (\gg_2, \circ)$ and
$(\gg_3, \circ)$ are compatible anti-pre-Lie algebraic structures
on $\gg_1, \gg_2$ and $\gg_3$ respectively. Note that
Eqs.~\eqref{graded-gg-1} and \eqref{graded-gg-2}  are exactly
Eqs.~\eqref{graded-bn-1} and \eqref{graded-bn-2} respectively in
each case. Therefore by Proposition \ref{V2(n-1)V0}, we have
$$x_1\circ z'_1=-4x_1,\  x_1\circ z'_2=2x_1,\ x_2\circ z'_2=-4x_2,\  x_2\circ z'_1=2x_2,\ x_3\circ (z'_1+z'_2)=-4x_3.$$
Hence we obtain
\begin{align*}
-4x_3&=x_3\circ (z'_1+z'_2)=[x_1, x_2]\circ (z'_1+z'_2)\\
&=x_2\circ(x_1\circ (z'_1+z'_2))-x_1\circ(x_2\circ (z'_1+z'_2))=-2x_2\circ x_1+2x_1\circ x_2 \\
&=2x_3,
\end{align*}
which is a contradiction. This completes the proof of the
conclusion.
\end{proof}

Next we apply Theorem~\ref{main} to study the compatible root
graded anti-pre-Lie algebraic structures on the following
finite-dimensional complex simple Lie algebras
$$A_n (n\geq 2), B_n (n\geq 2), C_n(n\geq 3), D_n (n\geq 4), E_6, E_7, E_8, F_4, G_2,$$ case by case.
For any $m\in\Z_+$, let ${\rm gl_m(\C)}$ denote the set of all
$m\times m$ matrices over $\C$. Moreover,  $e_{ij}$ ($1\leq i,
j\leq m$) denotes the matrix whose $i$-th row and $j$-th column is
1 and other positions are zero.

\begin{case}
The simple Lie algebra ${\rm sl_{n+1}(\C)}$, which is $A_n$,
$n\geq 2$.
\end{case}
 The simple Lie algebra ${\rm sl_{n+1}(\C)}$  $(n\geq 2)$ consists of all $(n+1)\times (n+1)$ matrices of trace zero (\cite{Car}).
 When $n\geq 3$, ${\rm sl_{n+1}(\C)}$ has the following Lie subalgebras.
 \begin{align*}
& AS_n=\C e_{12}\oplus \C e_{21}\oplus\oplus_{1\leq k\leq n}\C h_k, \\
&AS'_n=\C e_{23}\oplus \C e_{32}\oplus\C h_2\oplus\C h_1\oplus\C (e_{11}-e_{44})\oplus\oplus_{4\leq k\leq n}\C h_k,\\
&AS''_n=\C e_{13}\oplus \C e_{31}\oplus \C (e_{11}-e_{33})\oplus\C (e_{33}-e_{22})\oplus\C (e_{22}-e_{44})\oplus\oplus_{4\leq k\leq n}\C h_k,
 \end{align*}
 where $h_k=e_{kk}-e_{(k+1)(k+1)}$, $1\leq k\leq n$.
By direct observations, $AS_n, AS'_n$ and $AS''_n$ are isomorphic to $\bb_n$ under the following correspondences respectively.
\begin{align*}
&e_{12}\rightarrow x,\  e_{21}\rightarrow y,\  h_k\rightarrow z_k, \   1\leq k\leq n,\\
&e_{23}\rightarrow x,\  e_{32}\rightarrow y,\ h_2\rightarrow z_1,\  h_1\rightarrow z_2,\ e_{11}-e_{44}\rightarrow z_3,\ h_k\rightarrow z_k, \   4\leq k\leq n,\\
&e_{13}\rightarrow x,\  e_{31}\rightarrow y,\
e_{11}-e_{33}\rightarrow z_1,\  e_{33}-e_{22}\rightarrow z_2,\
e_{22}-e_{44}\rightarrow z_3,\\
& h_k\rightarrow z_k, \   4\leq k\leq n.
\end{align*}
Note that the fact that there is a compatible root graded
anti-pre-Lie algebraic structure on ${\rm sl_{n+1}(\C)}$ implies
that Eqs.~\eqref{graded-gg-1} and \eqref{graded-gg-2} hold. Therefore by Theorem \ref{main}
we get the following conclusion.
\begin{thm}\label{An}
    There does not exist a compatible root graded anti-pre-Lie algebraic structure on ${\rm sl_{n+1}(\C)}$ for any $n\geq 3$.
\end{thm}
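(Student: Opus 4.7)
The plan is to deduce Theorem~\ref{An} as a direct application of Theorem~\ref{main} to the three subalgebras $AS_n$, $AS'_n$, $AS''_n$ just exhibited. I would take $\gg = {\rm sl_{n+1}(\C)}$, let $\hh = \oplus_{k=1}^n \C h_k$ be the standard Cartan subalgebra, and set $z'_k = h_k$. Since $e_{11}-e_{44} = h_1+h_2+h_3$, $e_{11}-e_{33} = h_1+h_2$, $e_{33}-e_{22} = -h_2$ and $e_{22}-e_{44} = h_2+h_3$ all lie in $\hh$, the abelian parts of $AS_n$, $AS'_n$, $AS''_n$ each coincide with $\hh$, so I may set $\gg_1 = AS_n$, $\gg_2 = AS'_n$, $\gg_3 = AS''_n$.

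The six distinguished elements are taken to be
$$x_1 = e_{12},\ y_1 = e_{21},\ x_2 = e_{23},\ y_2 = e_{32},\ x_3 = e_{13},\ y_3 = e_{31}.$$
Reading off from the correspondences already given, $T_1$ sends $(e_{12}, e_{21}, h_1, h_2)$ to $(x, y, z_1, z_2)$, confirming condition~(1) of Theorem~\ref{main}; $T_2$ sends $(e_{23}, e_{32}, h_2, h_1)$ to $(x, y, z_1, z_2)$, which matches condition~(2) because $z'_2 = h_2$ and $z'_1 = h_1$; and $T_3$ sends $(e_{13}, e_{31}, h_1+h_2) = (e_{13}, e_{31}, e_{11}-e_{33})$ to $(x, y, z_1)$, matching condition~(3) because $z'_1 + z'_2 = h_1+h_2$. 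Condition~(4) is the familiar matrix identity $[e_{12}, e_{23}] = e_{13}$.

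It remains to verify that any compatible root graded anti-pre-Lie structure $({\rm sl_{n+1}(\C)}, \circ)$ restricts on each $\gg_i$ to an anti-pre-Lie structure satisfying Eqs.~\eqref{graded-gg-1} and \eqref{graded-gg-2}. This follows immediately from the root grading: whenever $u$ lies in a root space $\L_\alpha$ and $v$ in $\L_\beta$, we have $u \circ v \in \L_{\alpha+\beta}$. Hence $x_k \circ y_k \in \L_0 = \hh$; $z'_i \circ x_k \in \L_{\alpha_k} = \C x_k$, since every root space of $A_n$ is one-dimensional; similarly for $y_k$; $z'_i \circ z'_j \in \L_0 = \hh$; and $x_k \circ x_k \in \L_{2\alpha_k} = 0$ because $2(e_i - e_j)$ is not a root of $A_n$, and likewise $y_k \circ y_k = 0$. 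Theorem~\ref{main} then produces the desired contradiction. I do not anticipate a serious obstacle, since the combinatorial heart of the argument has already been absorbed into Theorem~\ref{main}; the present case reduces to displaying three copies of ${\rm sl_2(\C)}$ inside ${\rm sl_{n+1}(\C)}$ whose positive root vectors are linked by $[x_1, x_2] = x_3$, together with the routine root-space bookkeeping.
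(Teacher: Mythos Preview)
Your proposal is correct and follows essentially the same approach as the paper: both apply Theorem~\ref{main} to the three subalgebras $AS_n$, $AS'_n$, $AS''_n$ with the indicated correspondences to $\bb_n$, noting that $[e_{12},e_{23}]=e_{13}$ and that the root grading forces Eqs.~\eqref{graded-gg-1}--\eqref{graded-gg-2}. You have in fact spelled out more of the routine verifications (that the abelian parts of the three subalgebras coincide with $\hh$, that $T_3(z'_1+z'_2)=z_1$, and that $2\alpha_k$ is never a root) than the paper does, but the argument is the same.
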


When $n=2$, ${\rm sl_3(\C)}$ has the following Lie subalgebras.
  \begin{align}\label{AS2}
& AS_2=\C e_{12}\oplus \C e_{21}\oplus\C h_1\oplus\C h_2, \\
&AS'_2=\C e_{23}\oplus \C e_{32}\oplus\C h_2\oplus\C h_1,\\
&AS''_2=\C e_{13}\oplus \C e_{31}\oplus \C (e_{11}-e_{33})\oplus\C (e_{33}-e_{22}),
 \end{align}
 where $h_1=e_{11}-e_{22}, h_2=e_{22}-e_{33}$. Obviously, $AS_2, AS'_2$ and $AS''_2$ are isomorphic to
 $\bb_2$.
 Then by Theorem \ref{main} again we obtain the following result.
\begin{thm}\label{A2}
    There does not exist a compatible root graded anti-pre-Lie algebraic structure on ${\rm sl_3(\C)}$.
\end{thm}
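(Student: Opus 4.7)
The plan is to apply Theorem~\ref{main} with $n=2$ and $\gg = {\rm sl_3(\C)}$, exactly as in the proof of Theorem~\ref{An} but with the three subalgebras $AS_2, AS'_2, AS''_2$ in place of $AS_n, AS'_n, AS''_n$. I set $\hh = \C h_1 \oplus \C h_2$ with $z'_1 = h_1$ and $z'_2 = h_2$, and choose $(x_1, y_1) = (e_{12}, e_{21})$, $(x_2, y_2) = (e_{23}, e_{32})$ and $(x_3, y_3) = (e_{13}, e_{31})$, so that $\gg_1 = AS_2$, $\gg_2 = AS'_2$ and $\gg_3 = AS''_2$ as displayed in Eq.~\eqref{AS2}.

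I would then verify the four hypotheses of Theorem~\ref{main} one by one. For (1), the correspondence $e_{12} \mapsto x, e_{21} \mapsto y, h_1 \mapsto z_1, h_2 \mapsto z_2$ is a Lie algebra isomorphism from $AS_2$ to $\bb_2$, since $[h_1, e_{12}] = 2e_{12}$, $[h_2, e_{12}] = -e_{12}$, etc. For (2), swapping the roles $h_1 \leftrightarrow z_2$ and $h_2 \leftrightarrow z_1$ yields the required isomorphism $AS'_2 \to \bb_2$ with $e_{23} \mapsto x, e_{32} \mapsto y$. For (3), I define $T_3: AS''_2 \to \bb_2$ by $T_3(e_{13}) = x, T_3(e_{31}) = y, T_3(e_{11}-e_{33}) = z_1, T_3(e_{33}-e_{22}) = z_2$; since $e_{11}-e_{33} = h_1 + h_2 = z'_1 + z'_2$, the required condition $T_3(z'_1 + z'_2) = z_1$ holds, and a direct computation (using $[h_1+h_2, e_{13}] = 2e_{13}$ and $[-h_2, e_{13}] = -e_{13}$) shows that $T_3$ preserves all Lie brackets of $\bb_2$. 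Hypothesis (4), namely $[x_1, x_2] = x_3$, reduces to the standard identity $[e_{12}, e_{23}] = e_{13}$.

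Finally, any compatible root graded anti-pre-Lie algebraic structure on ${\rm sl_3(\C)}$ automatically satisfies Eqs.~\eqref{graded-gg-1} and \eqref{graded-gg-2} for each of the three subalgebras. Indeed, in the root system $A_2$ each $e_{ij}$ with $i \neq j$ spans a one-dimensional root space, the opposite root space is $\C e_{ji}$, and $2\delta$ is never a root. Hence $e_{ij} \circ e_{ij} \in 0$, $e_{ij} \circ e_{ji} \in \hh$, $h \circ e_{ij} \in \C e_{ij}$ for $h \in \hh$, and $\hh \circ \hh \subseteq \hh$. All hypotheses of Theorem~\ref{main} are therefore met, and that theorem yields the nonexistence asserted in Theorem~\ref{A2}. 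The only slightly delicate point is the bracket-preservation check for $T_3$; everything else is routine bookkeeping parallel to the proof of Theorem~\ref{An}.
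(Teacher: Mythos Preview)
Your proposal is correct and follows exactly the same approach as the paper: the paper simply records the three subalgebras $AS_2, AS'_2, AS''_2$, notes they are isomorphic to $\bb_2$, and invokes Theorem~\ref{main}. You have spelled out the verification of the four hypotheses (including the key identification $e_{11}-e_{33}=h_1+h_2=z'_1+z'_2$ and the bracket $[e_{12},e_{23}]=e_{13}$) that the paper leaves implicit, but the argument is the same.
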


\begin{case}
The simple Lie algebra $\oo(2n+1, \C)$, which is $B_n$, $n\geq 2$.
 \end{case}
The simple Lie algebra $\oo(2n+1, \C)$ is a Lie subalgebra of
${\rm gl_{2n+1}(\C)}$. For any matrix in $\oo(2n+1, \C)$, we
renumber its rows and columns as $0,1,\cdots, n,n+1,\cdots, 2n$.
When $n\geq 3$, $\oo(2n+1, \C)$ has the following Lie subalgebras
(\cite{Car}).
\begin{align}
BS_n={\rm span}\{&e_{12}-e_{(n+2)(n+1)}, e_{21}-e_{(n+1)(n+2)},  \notag \\
 &h_1-h_2, h_2-h_3, h_i~|~3\leq i\leq n\}, \label{BSn}\\
BS'_n={\rm span}\{&e_{23}-e_{(n+3)(n+2)}, e_{32}-e_{(n+2)(n+3)}, \notag \\
&h_2-h_3, h_1-h_2, h_1, h_i~|~4\leq i\leq n\},   \label{BSn'}\\
BS''_n={\rm span}\{&e_{13}-e_{(n+3)(n+1)}, e_{31}-e_{(n+1)(n+3)},  \notag  \\
&h_1-h_3, h_3, h_2, h_i~|~4\leq i\leq n\},  \label{BSn''}
\end{align}
where $h_i=e_{ii}-e_{(n+i)(n+i)}$,  $1\leq i\leq n$. It is clear
that $BS_n, BS'_n$ and $BS''_n$ are isomorphic to $\bb_n$ under
the following correspondences respectively.
\begin{align*}
&e_{12}-e_{(n+2)(n+1)}\rightarrow x,\ \ \  e_{21}-e_{(n+1)(n+2)}\rightarrow y,\\
& h_1-h_2 \rightarrow z_1,\ \  h_2-h_3\rightarrow z_2,\ \    h_i\rightarrow z_i, \   \    3\leq i\leq n,\\
&e_{23}-e_{(n+3)(n+2)}\rightarrow x,\  \ \ e_{32}-e_{(n+2)(n+3)}\rightarrow y,\  \ \ h_2-h_3\rightarrow z_1,\\
&h_1-h_2\rightarrow z_2,\   \ \ h_1\rightarrow z_3, \ \ \  h_i\rightarrow z_i, \ \ \   4\leq i\leq n,\\
&e_{13}-e_{(n+3)(n+1)}\rightarrow x,\ \ \  e_{31}-e_{(n+1)(n+3)}\rightarrow y,\ \ \ h_1-h_3\rightarrow z_1,\\
&h_3\rightarrow z_2,\ \ \ h_2\rightarrow z_3,  \ \ \
h_i\rightarrow z_i, \  \ \   4\leq i\leq n.
\end{align*}
Then by Theorem \ref{main} we get the following result.
\begin{thm}\label{Bn}
    There does not exist a compatible root graded anti-pre-Lie algebraic structure on $\oo(2n+1, \C)$ for any $n\geq 3$.
\end{thm}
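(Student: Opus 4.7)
The plan is to apply Theorem~\ref{main} in the same spirit as the proofs of Theorems~\ref{An} and \ref{A2}: produce three Lie subalgebras of $\oo(2n+1,\C)$, each isomorphic to $\bb_n$, sharing a common Cartan-type abelian subalgebra, and verify the single bracket identity $[x_1,x_2]=x_3$. Conveniently, the candidates $BS_n$, $BS'_n$ and $BS''_n$ are already defined in Eqs.~\eqref{BSn}--\eqref{BSn''}, together with the explicit correspondences giving isomorphisms onto $\bb_n$, so the setup is largely in place.

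First I would fix $\hh=\oplus_{i=1}^n\C h_i$ (with $h_i=e_{ii}-e_{(n+i)(n+i)}$) as the common abelian subalgebra, and extract $z'_1=h_1-h_2$, $z'_2=h_2-h_3$, $z'_i=h_i$ for $3\leq i\leq n$ from the displayed correspondences. With these choices the three maps listed below Eq.~\eqref{BSn''} become the $T_1,T_2,T_3$ of Theorem~\ref{main}: indeed $T_1(z'_1)=z_1,\ T_1(z'_2)=z_2$ and $T_2(z'_2)=z_1,\ T_2(z'_1)=z_2$ can be read off directly, while $T_3(z'_1+z'_2)=T_3(h_1-h_3)=z_1$ matches the $BS''_n$ correspondence. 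This verifies hypotheses (1)--(3) of Theorem~\ref{main}.

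The only real calculation is hypothesis (4): $[x_1,x_2]=x_3$ for $x_1=e_{12}-e_{(n+2)(n+1)}$, $x_2=e_{23}-e_{(n+3)(n+2)}$, $x_3=e_{13}-e_{(n+3)(n+1)}$. Expanding the bracket, the two cross terms $[e_{12},e_{(n+3)(n+2)}]$ and $[e_{(n+2)(n+1)},e_{23}]$ both vanish for $n\geq 3$ since no matrix-unit indices match, while $[e_{12},e_{23}]=e_{13}$ and $[e_{(n+2)(n+1)},e_{(n+3)(n+2)}]=-e_{(n+3)(n+1)}$, so the sum equals $x_3$ as required. Next, one must observe that a compatible root graded anti-pre-Lie structure on $\oo(2n+1,\C)$ automatically satisfies conditions \eqref{graded-gg-1}--\eqref{graded-gg-2}: each $x_k$ (resp.~$y_k$) lies in the one-dimensional root space $\L_{\alpha_k}$ (resp.~$\L_{-\alpha_k}$) for $\alpha_k\in\{\epsilon_1-\epsilon_2,\,\epsilon_2-\epsilon_3,\,\epsilon_1-\epsilon_3\}$, each $z'_i$ lies in $\L_0=\hh$, and $2\alpha_k$ is never a root of $B_n$, so $x_k\circ x_k=y_k\circ y_k=0$.

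Theorem~\ref{main} then yields the desired contradiction. There is no serious obstacle in this argument; the only place requiring care is the bookkeeping that identifies $z'_1,z'_2\in\hh$ consistently with the images $z_1,z_2$ across the three different subalgebras. Once this is in place and $[x_1,x_2]=x_3$ is confirmed, the non-existence follows immediately.
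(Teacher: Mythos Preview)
Your proposal is correct and follows exactly the paper's approach: the paper displays $BS_n$, $BS'_n$, $BS''_n$ with the very correspondences you use, and then simply invokes Theorem~\ref{main}. Your added verification of $[x_1,x_2]=x_3$ and of conditions \eqref{graded-gg-1}--\eqref{graded-gg-2} via the root-space decomposition are the details the paper leaves implicit, but the strategy is identical.
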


When $n=2$, $\oo(5, \C)$ still has three Lie subalgebras which are
isomorphic to $\bb_2$. For example, set
\begin{align*}
&BS_2={\rm span}\{2e_{10}-e_{03}, e_{01}-2e_{30}, 2(e_{11}-e_{33}), -e_{11}+e_{22}+e_{33}-e_{44}\},  \\
&BS'_2={\rm span}\{2e_{20}-e_{04}, e_{02}-2e_{40}, 2(e_{22}-e_{44}), e_{11}-e_{22}-e_{33}+e_{44}\},  \\
&BS''_2={\rm span}\{e_{23}-e_{14}, e_{32}-e_{41},
e_{11}+e_{22}-e_{33}-e_{44}, -e_{11}+e_{33}\}.
\end{align*}
It is straightforward to show that $BS_2, BS'_2$ and $BS''_2$ are
isomorphic to $\bb_2$ under the following correspondences respectively. 
\begin{align*}
&2e_{10}-e_{03}\rightarrow x,\  e_{01}-2e_{30}\rightarrow y,\ 2(e_{11}-e_{33})\rightarrow z_1,\  -e_{11}+e_{22}+e_{33}-e_{44}\rightarrow z_2,\\
&2e_{20}-e_{04}\rightarrow x,\  e_{02}-2e_{40}\rightarrow y,\  2(e_{22}-e_{44})\rightarrow z_1,\ e_{11}-e_{22}-e_{33}+e_{44}\rightarrow z_2,\\
&e_{23}-e_{14}\rightarrow x,\  e_{32}-e_{41}\rightarrow y,\ 
e_{11}+e_{22}-e_{33}-e_{44}\rightarrow z_1,\ -e_{11}+e_{33}\rightarrow z_2.
\end{align*}
But we notice $$[2e_{10}-e_{03}, 2e_{20}-e_{04}]=2(e_{23}-e_{14}).$$ 
So $BS_2, BS'_2$ and $BS''_2$ do not satisfy the conditions of Theorem \ref{main}.
 However we
still have the following conclusion although the proof is
 similar to the one for Theorem \ref{main}.
\begin{thm}\label{B2}
    There does not exist a compatible root graded anti-pre-Lie algebraic structure on $\oo(5, \C)$.
\end{thm}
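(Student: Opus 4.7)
The plan is to adapt the argument of Theorem \ref{main} to $\oo(5,\C)$, keeping track of the extra factor of $2$ in $[x_1,x_2]=2x_3$ that blocks a direct application. First, I would check that the three subalgebras $BS_2, BS'_2, BS''_2$ share the full Cartan $\hh=\C H_1\oplus\C H_2$ of $\oo(5,\C)$, where $H_i=e_{ii}-e_{(2+i)(2+i)}$: indeed $2(e_{11}-e_{33})=2H_1$, $-e_{11}+e_{22}+e_{33}-e_{44}=-H_1+H_2$, $2(e_{22}-e_{44})=2H_2$, $e_{11}-e_{22}-e_{33}+e_{44}=H_1-H_2$, $e_{11}+e_{22}-e_{33}-e_{44}=H_1+H_2$, and $-e_{11}+e_{33}=-H_1$, and each pair generates $\C H_1\oplus\C H_2$.

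Second, assume by contradiction that $(\oo(5,\C),\circ)$ is a compatible root graded anti-pre-Lie algebra. Since the root spaces are one-dimensional, the restriction of $\circ$ to each $BS_i$ satisfies Eqs.~\eqref{graded-bn-1} and \eqref{graded-bn-2} (with the identifications given), so Proposition \ref{V2(n-1)V0} applies to each $BS_i$. Writing $x_1=2e_{10}-e_{03}$, $x_2=2e_{20}-e_{04}$, $x_3=e_{23}-e_{14}$, the identifications $T_1,T_2,T_3$ translate the formulas of Proposition \ref{V2(n-1)V0} into
\begin{align*}
x_1\circ H_1=-2x_1,\quad x_1\circ H_2=0,\\
x_2\circ H_1=0,\quad x_2\circ H_2=-2x_2,\\
x_3\circ H_1=-2x_3,\quad x_3\circ H_2=-2x_3,
\end{align*}
after solving the two linear equations coming from the two Cartan generators of each $BS_i$.

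Third, I would compute $[x_1,x_2]$ directly and verify the key relation $[x_1,x_2]=2(e_{23}-e_{14})=2x_3$ already recorded in the excerpt. Then I would apply the defining identity \eqref{anti-1} with $z=H_1$ and $z=H_2$ to the pair $(x_1,x_2)$. Using $[x_2,x_1]=-2x_3$ together with the table above, the identity with $z=H_1$ forces $x_2\circ x_1=2x_3$, whereas the identity with $z=H_2$ forces $x_1\circ x_2=-2x_3$. Combined with $x_1\circ x_2-x_2\circ x_1=[x_1,x_2]=2x_3$, this yields $-4x_3=2x_3$, i.e.\ $x_3=0$, a contradiction.

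The main obstacle, as the authors already note, is precisely the factor of $2$ in $[x_1,x_2]=2x_3$: it prevents the clean equality $-4x_3=2x_3$ of the proof of Theorem \ref{main} from being imported verbatim, and instead one must re-run the anti-pre-Lie identity separately against each Cartan generator $H_1,H_2$. Once this is done, the incompatibility of the two resulting expressions for $x_1\circ x_2$ delivers the contradiction, and no additional input is required beyond Proposition \ref{V2(n-1)V0}.
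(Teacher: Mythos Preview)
Your argument is correct and follows essentially the same route as the paper: both apply Proposition~\ref{V2(n-1)V0} to the three subalgebras $BS_2, BS'_2, BS''_2$ and then plug the resulting formulas into the anti-pre-Lie identity~\eqref{anti-1} with a Cartan element as the third variable to force a contradiction from $[x_1,x_2]=2x_3$. The only difference is cosmetic: the paper uses a single application of~\eqref{anti-1} with $z=H_1+H_2$, obtaining $-8x_3=4x_3$ directly, whereas you split this into two applications with $z=H_1$ and $z=H_2$ to first pin down $x_1\circ x_2=-2x_3$ and $x_2\circ x_1=2x_3$ separately before combining them; the contradictions $-8x_3=4x_3$ and $-4x_3=2x_3$ are of course equivalent.
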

\begin{proof}
Assume that $(\oo(5, \C), \circ)$ is a compatible root graded
anti-pre-Lie algebraic structure on $\oo(5, \C)$. Then by the root
graded condition and Proposition \ref{V2(n-1)V0} we get
\begin{align*}
&(2e_{10}-e_{03})\circ 2(e_{11}-e_{33})=-4(2e_{10}-e_{03}),\\
&(2e_{10}-e_{03})\circ (-e_{11}+e_{22}+e_{33}-e_{44})=2(2e_{10}-e_{03}),\\
&(2e_{20}-e_{04})\circ 2(e_{22}-e_{44})=-4(2e_{20}-e_{04}),  \\
&(2e_{20}-e_{04})\circ (e_{11}-e_{22}-e_{33}+e_{44})=2(2e_{20}-e_{04}),  \\
&(e_{23}-e_{14})\circ (e_{11}+e_{22}-e_{33}-e_{44})=-4(e_{23}-e_{14}).
\end{align*}
Hence we have
\begin{align*}
-8(e_{23}-e_{14})=&2(e_{23}-e_{14})\circ (e_{11}+e_{22}-e_{33}-e_{44})\\
=&[2e_{10}-e_{03}, 2e_{20}-e_{04}]\circ(e_{11}+e_{22}-e_{33}-e_{44})\\
=&(2e_{20}-e_{04})\circ ((2e_{10}-e_{03})\circ (e_{11}+e_{22}-e_{33}-e_{44})) \\
&-(2e_{10}-e_{03})\circ ((2e_{20}-e_{04})\circ(e_{11}+e_{22}-e_{33}-e_{44}))\\
=&-2(2e_{20}-e_{04})\circ (2e_{10}-e_{03})+2(2e_{10}-e_{03})\circ (2e_{20}-e_{04})\\
=&4(e_{23}-e_{14}),
\end{align*}
which is a contradiction. This completes the proof of the
conclusion.
\end{proof}

\begin{case}
The simple Lie algebra $\sp(2n, \C)$, which is $C_n$, $n\geq 3$.
 \end{case}
The simple Lie algebra $\sp(2n, \C)$ is a Lie subalgebra of  ${\rm
gl_{2n}(\C)}$. For any matrix in $\sp(2n, \C)$, we still number
its rows and columns as $1,2, \cdots, n, n+1,\cdots, 2n$. For any
$n\geq 3$, $BS_n, BS'_n$ and $BS''_n$, defined by
Eqs.~\eqref{BSn}, \eqref{BSn'} and  \eqref{BSn''} respectively,
are the Lie subalgebras of $\sp(2n, \C)$  (\cite{Car}). Then by
Theorem \ref{main} we get the following result.
\begin{thm}\label{Cn}
    There does not exist a compatible root graded anti-pre-Lie algebraic structure on $\sp(2n, \C)$ for any $n\geq 3$.
\end{thm}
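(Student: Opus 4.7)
The plan is to follow the same template as the proof of Theorem \ref{Bn}, transplanting it into the symplectic setting. The paper has already observed that the three subspaces $BS_n, BS'_n, BS''_n$ defined in Eqs.~\eqref{BSn}, \eqref{BSn'}, \eqref{BSn''} are Lie subalgebras of $\sp(2n,\C)$ as well. So the task reduces to checking that these three subalgebras fit into the framework of Theorem \ref{main}: namely, that each is isomorphic to $\bb_n$ via a map that sends a chosen pair $(x_k,y_k)$ to $(x,y)$ and identifies the common abelian subalgebra $\hh=\oplus_{i=1}^n \C h_i$ with $\hh=\oplus_{i=1}^n \C z'_i$ as required, and that the bracket relation $[x_1,x_2]=x_3$ holds.

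First, I would declare the isomorphisms $T_1: BS_n\to \bb_n$, $T_2: BS'_n\to \bb_n$, $T_3: BS''_n\to \bb_n$ using exactly the correspondences already displayed in the $B_n$ case, so that
\begin{align*}
&x_1 = e_{12}-e_{(n+2)(n+1)},\quad y_1 = e_{21}-e_{(n+1)(n+2)},\\
&x_2 = e_{23}-e_{(n+3)(n+2)},\quad y_2 = e_{32}-e_{(n+2)(n+3)},\\
&x_3 = e_{13}-e_{(n+3)(n+1)},\quad y_3 = e_{31}-e_{(n+1)(n+3)},
\end{align*}
and the common Cartan basis $\{h_1-h_2, h_2-h_3, h_1-h_3, h_3, h_2, h_i\ (i\ge 4)\}$ is relabelled as $\{z'_1,z'_2,\ldots,z'_n\}$ in each of the three ways prescribed by Conditions (1)--(3) of Theorem \ref{main}. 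That these are Lie algebra isomorphisms onto $\bb_n$ is routine since the paper already established it on the orthogonal side and the same matrix commutator calculations apply verbatim here.

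Next, I would verify the crucial bracket $[x_1,x_2]=x_3$ by a quick direct computation with $e_{ij}e_{kl}=\delta_{jk}e_{il}$. Of the four cross terms expanding $[e_{12}-e_{(n+2)(n+1)},\, e_{23}-e_{(n+3)(n+2)}]$, two vanish because for $n\ge 3$ none of the indices $\{1,2,n+1,n+2\}$ matches with $\{2,3,n+2,n+3\}$ in the required way, while the remaining two give $[e_{12},e_{23}]=e_{13}$ and $[e_{(n+2)(n+1)},e_{(n+3)(n+2)}]=-e_{(n+3)(n+1)}$, summing to $x_3$. With this, all four hypotheses of Theorem \ref{main} are in place.

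Finally, since any compatible root graded anti-pre-Lie structure on $\sp(2n,\C)$ automatically satisfies Eqs.~\eqref{graded-gg-1} and \eqref{graded-gg-2} on each of the subalgebras $BS_n, BS'_n, BS''_n$ (the root grading forces $x_k\circ y_k$ and $z'_i\circ z'_j$ into $\hh$, and $z'_i\circ x_k$, $z'_i\circ y_k$ into $\C x_k$, $\C y_k$ respectively), Theorem \ref{main} yields the desired contradiction. I do not anticipate any serious obstacle; the only thing to be careful about is not miscounting which cross-products vanish in the symplectic matrix algebra, which is the exact same bookkeeping as in the $B_n$ case and requires $n\ge 3$ so that the index sets genuinely separate.
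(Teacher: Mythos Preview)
Your proposal is correct and follows essentially the same approach as the paper: the paper's proof simply notes that $BS_n, BS'_n, BS''_n$ are Lie subalgebras of $\sp(2n,\C)$ and invokes Theorem~\ref{main}, while you supply the explicit verification of the hypotheses (in particular the bracket $[x_1,x_2]=x_3$) that the paper leaves implicit. One small expository point: in Theorem~\ref{main} the basis $z'_1,\dots,z'_n$ of $\hh$ is fixed once and for all (here $z'_1=h_1-h_2$, $z'_2=h_2-h_3$, plus any completion), and it is the three isomorphisms $T_k$ that shuffle the images---your phrase ``relabelled in each of the three ways'' slightly obscures this, but the mathematics is right.
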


\begin{case}
The simple Lie algebra $\oo(2n, \C)$, which is $D_n$, $n\geq 4$.
\end{case}

The simple Lie algebra $\oo(2n, \C)$ is a Lie subalgebra of  ${\rm
gl_{2n}(\C)}$. For any matrix in $\oo(2n, \C)$, we still number
its rows and columns as $1,2, \cdots, n, n+1,\cdots, 2n$. For any
$n\geq 4$, $BS_n, BS'_n$ and $BS''_n$, defined by
Eqs.~\eqref{BSn}, \eqref{BSn'} and  \eqref{BSn''} respectively,
are the  Lie subalgebras of $\oo(2n, \C)$ (\cite{Car}). Then by
Theorem \ref{main} we get the following result.
\begin{thm}\label{Dn}
    There does not exist a compatible root graded anti-pre-Lie algebraic structure on $\oo(2n, \C)$ for any $n\geq 4$.
\end{thm}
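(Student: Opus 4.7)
The plan is to apply Theorem~\ref{main} with $\gg = \oo(2n,\C)$, taking the three subalgebras $BS_n$, $BS'_n$, $BS''_n$ (defined in Eqs.~\eqref{BSn}--\eqref{BSn''}) as $\gg_1, \gg_2, \gg_3$ respectively. The excerpt already records that these sit inside $\oo(2n,\C)$ for $n \geq 4$ (from \cite{Car}) and gives explicit isomorphisms with $\bb_n$, so most hypotheses of Theorem~\ref{main} are immediate; only three items require checking.

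First, I would fix $\hh = \oplus_{i=1}^n \C z'_i$ by setting $z'_1 = h_1 - h_2$, $z'_2 = h_2 - h_3$, and $z'_i = h_i$ for $3 \leq i \leq n$, where $h_i = e_{ii} - e_{(n+i)(n+i)}$. The three correspondences listed in the $B_n$ case then yield $T_1(z'_1)=z_1$, $T_1(z'_2)=z_2$, $T_2(z'_2)=z_1$, $T_2(z'_1)=z_2$, and $T_3(z'_1+z'_2)=T_3(h_1-h_3)=z_1$ as required.

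Second, take $x_1 = e_{12}-e_{(n+2)(n+1)}$, $x_2 = e_{23}-e_{(n+3)(n+2)}$, and $x_3 = e_{13}-e_{(n+3)(n+1)}$ (with the $y_k$'s as their natural transposed partners appearing in \eqref{BSn}--\eqref{BSn''}), and verify $[x_1,x_2]=x_3$ using $e_{ij}e_{kl}=\delta_{jk}e_{il}$. For $n \geq 4$ the cross terms vanish by index mismatch, so only two contributions survive and combine to give the stated identity with the correct coefficient $1$ — no nuisance factor of $2$ analogous to the one that forced a separate argument in the proof of Theorem~\ref{B2}.

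Third, conditions \eqref{graded-gg-1}--\eqref{graded-gg-2} follow from the root graded hypothesis on $(\oo(2n,\C),\circ)$: each of the relevant nonzero root spaces of $\oo(2n,\C)$ is one-dimensional, and the would-be root spaces for the doubled roots $2\al$ are trivial, so $\circ$ forces $x_k \circ y_k \in \hh$, $z'_i \circ x_k \in \C x_k$, $z'_i \circ y_k \in \C y_k$, $z'_i \circ z'_j \in \hh$, and $x_k \circ x_k = y_k \circ y_k = 0$. Theorem~\ref{main} then yields the desired contradiction. The only real computation in this argument is the bracket identity $[x_1, x_2] = x_3$, and that is the main — though essentially routine — obstacle.
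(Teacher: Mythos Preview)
Your proposal is correct and follows exactly the paper's approach: it invokes the same three subalgebras $BS_n$, $BS'_n$, $BS''_n$ inside $\oo(2n,\C)$ and applies Theorem~\ref{main}. You supply more verification detail than the paper (which simply cites \cite{Car} and Theorem~\ref{main}), but the argument is the same.
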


\begin{case}
The simple Lie algebras $\ee_6, \ee_7$ and $\ee_8$, which are
$E_6$, $E_7$ and $E_8$ respectively.
\end{case}

Recall that (\cite{Car, Hum}) the Cartan matrix of $E_6$ is
\begin{align*}
M=\begin{pmatrix}
2&-1&0&0&0&0\\
-1&2&-1&0&0&0\\
0&-1&2&-1&-1&0\\
0&0&-1&2&0&0\\
0&0&-1&0&2&-1\\
0&0&0&0&-1&2
\end{pmatrix}.
\end{align*}
The simple Lie algebra $\ee_6$ is generated by $e_1, e_2, \cdots,
e_6, h'_1, h'_2, \cdots, h'_6, f_1, f_2, \cdots, f_6$ with the
following relations.
\begin{align*}
&[h'_i, h'_j]=[h'_i, e_j]-M_{ij}e_j=[h'_i, f_j]+M_{ij}f_j=[e_i, f_i]-h'_i=0, \ \ 1\leq i, j\leq 6, \\
&[e_i, f_j]=ad_{e_i}^{1-M_{ij}}e_j=ad_{f_i}^{1-M_{ij}}f_j=0, \ \ \
 1\leq i\ne j\leq 6,
\end{align*}
where $M_{ij}$ is the element in the $i$-th row and $j$-th column
of the matrix $M$, $1\leq i,j\leq 6$ and $ad_{e_i}e_j=[e_i, e_j],
ad_{f_i}f_j=[f_i, f_j]$, $1\leq i\ne j\leq 6$. Thus the Lie
algebra $\ee_6$ has the following Lie subalgebras.
\begin{align*}
&ES_6={\rm span}\{e_1, f_1, h'_1, h'_2, h'_3, h'_4, h'_5, h'_6\}, \\
&ES'_6={\rm span}\{-e_2, -f_2, h'_2, h'_1, h'_1-h'_3, h'_4, h'_5, h'_6\}, \\
&ES''_6={\rm span}\{-[e_1, e_2], [f_1, f_2], h'_1+h'_2, h'_3,
h'_1-h'_2, h'_4, h'_5, h'_6\},
\end{align*}
are isomorphic to $\bb_6$ under the following correspondences
respectively.
\begin{align*}
&e_1\mapsto x,\ f_1\mapsto y,\  h'_i\mapsto z_i, \  1\leq i\leq 6,\\
&-e_2\mapsto x,\ -f_2\mapsto y,\  h'_2\mapsto z_1,\ h'_1\mapsto z_2,\  h'_1-h'_3\mapsto z_3,\ h'_i\mapsto z_i, \ 4\leq i\leq 6,\\
&-[e_1, e_2]\mapsto x,  [f_1, f_2]\mapsto y,  h'_1+h'_2\mapsto
z_1,  h'_3\mapsto z_2, h'_1-h'_2\mapsto z_3, h'_i\mapsto z_i, 4\leq i\leq 6.
\end{align*}
Then by Theorem \ref{main} we get the following result.
\begin{thm}\label{E}
    There does not exist a compatible root graded anti-pre-Lie algebraic structure on $\ee_6$.
Similarly, there does not exist a compatible root graded
anti-pre-Lie algebraic structure on either $\ee_7$ or $ \ee_8$.
\end{thm}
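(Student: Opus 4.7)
The plan is to verify the four hypotheses of Theorem \ref{main} (with $n=6$) for the Lie algebra $\ee_6$ using the three subalgebras $ES_6$, $ES'_6$, $ES''_6$ listed immediately before the statement, and then conclude directly. Observe that the root grading condition on a compatible anti-pre-Lie structure on $\ee_6$ forces Eqs.~\eqref{graded-gg-1} and \eqref{graded-gg-2} for any choice of $x_k\in\LL_{\delta_k}$, $y_k\in\LL_{-\delta_k}$ (where $\delta_k$ is a root and $[x_k,y_k]\in\hh$), because each nonzero root space of $\ee_6$ is one-dimensional and the Cartan subalgebra $\hh=\oplus_{i=1}^6\C h'_i$ is abelian. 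So it suffices to produce the data $(x_k,y_k,z'_i)$ required by Theorem \ref{main}.

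I would take $z'_i:=h'_i$ for $1\le i\le 6$, and set
\[
x_1:=e_1,\ y_1:=f_1,\qquad x_2:=-e_2,\ y_2:=-f_2,\qquad x_3:=-[e_1,e_2],\ y_3:=[f_1,f_2].
\]
Condition (4), $[x_1,x_2]=x_3$, is immediate. For conditions (1)--(3), one checks that the maps $T_1,T_2,T_3$ displayed right before Theorem \ref{E} are Lie algebra isomorphisms onto $\bb_6$ sending $(z'_1,z'_2)\mapsto(z_1,z_2)$, $(z'_2,z'_1)\mapsto(z_1,z_2)$, and $z'_1+z'_2\mapsto z_1$, respectively. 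These verifications are straightforward Serre-relation computations from the Cartan matrix $M$ of $E_6$: for $ES_6$ one only needs $[h'_i,e_1]=M_{i1}e_1$, $[h'_i,f_1]=-M_{i1}f_1$, and the column $(M_{i1})_{i\ge 3}$ being zero; for $ES'_6$ one uses the symmetric relations for $e_2,f_2$ together with the fact that $h'_1-h'_3$ kills $e_2,f_2$ because $M_{12}=M_{32}=-1$; and for $ES''_6$ one applies the Jacobi identity to compute
\[
[[e_1,e_2],[f_1,f_2]]=-h'_1-h'_2,\qquad [h'_i,[e_1,e_2]]=(M_{i1}+M_{i2})[e_1,e_2],
\]
from which the identifications $z_1\leftrightarrow h'_1+h'_2$, $z_2\leftrightarrow h'_3$, $z_3\leftrightarrow h'_1-h'_2$ drop out (using $M_{31}+M_{32}=-1$ for the $z_2$-action and the vanishing of $M_{i1}+M_{i2}$ and $M_{11}+M_{12}-M_{21}-M_{22}$ for the other generators).

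With hypotheses (1)--(4) of Theorem \ref{main} established, Theorem \ref{main} rules out any compatible anti-pre-Lie structure on $\ee_6$ satisfying Eqs.~\eqref{graded-gg-1} and \eqref{graded-gg-2}; since a root graded structure satisfies these, the nonexistence follows. The cases $\ee_7$ and $\ee_8$ proceed identically: the Cartan matrices of $E_7$ and $E_8$ have the same upper-left $3\times 3$ block (coming from the chain $\alpha_1$--$\alpha_2$--$\alpha_3$), so the same $e_1,e_2,f_1,f_2$ and the same linear combinations of Cartan elements produce subalgebras isomorphic to $\bb_7$ and $\bb_8$ under analogous maps, and Theorem \ref{main} applies verbatim.

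The only place that requires genuine (as opposed to routine) work is the bracket $[[e_1,e_2],[f_1,f_2]]=-h'_1-h'_2$ needed for $ES''_6$; everything else reduces to reading entries of $M$. I expect the main subtlety is keeping track of signs in the three identifications so that condition (4) lines up correctly with the choice of highest-weight vector $x_3$; the choices $x_2=-e_2$ and $x_3=-[e_1,e_2]$ above are precisely the ones that reconcile the sign coming from Jacobi with the requirement $[x_1,x_2]=x_3$.
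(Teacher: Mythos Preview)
Your proposal is correct and follows exactly the paper's approach: the paper sets up the three subalgebras $ES_6$, $ES'_6$, $ES''_6$ with precisely the correspondences you list (including the choices $x_2=-e_2$, $x_3=-[e_1,e_2]$ that make condition (4) hold), and then invokes Theorem~\ref{main}; the cases $\ee_7$, $\ee_8$ are dismissed with ``Similarly'' just as you suggest. Your write-up simply fills in the Serre-relation verifications that the paper leaves implicit.
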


\begin{case}
The simple Lie algebra $\ff_4$, which is $F_4$.
\end{case}
Recall that (\cite{Car, Hum}) the Cartan matrix of $F_4$ is
\begin{align*}
N=\begin{pmatrix}
2&-1&0&0\\
-1&2&-1&0\\
0&-2&2&-1\\
0&0&-1&2
\end{pmatrix}.
\end{align*}
The simple Lie algebra $\ff_4$ is generated by $e_1, e_2, e_3, e_4, h'_1, h'_2, h'_3, h'_4, f_1, f_2, f_3, f_4$ with the following relations.
\begin{align*}
&[h'_i, h'_j]=[h'_i, e_j]-N_{ij}e_j=[h'_i, f_j]+N_{ij}f_j=[e_i, f_i]-h'_i=0, \ \  1\leq i, j\leq 4, \\
&[e_i, f_j]=ad_{e_i}^{1-N_{ij}}e_j=ad_{f_i}^{1-N_{ij}}f_j=0, \ \  1\leq i\ne j\leq 4,
\end{align*}
where $N_{ij}$ is the element in the $i$-th row and $j$-th column
of the matrix $N$,  $1\leq i,j\leq 4$ and $ad_{e_i}e_j=[e_i, e_j],
ad_{f_i}f_j=[f_i, f_j]$, $1\leq i\ne j\leq 4$. Thus the Lie
algebra $\ff_4$ has the following Lie subalgebras.
\begin{align*}
&FS_4={\rm span}\{e_1, f_1, h'_1, h'_2, h'_3, h'_4\}, \\
&FS'_4={\rm span}\{-e_2, -f_2, h'_2, h'_1, 2h'_1-h'_3, h'_4\}, \\
&FS''_4={\rm span}\{-[e_1, e_2], [f_1, f_2], h'_1+h'_2, \frac{1}{2}h'_3, h'_1-h'_2, h'_4\},
\end{align*}
are isomorphic to $\bb_4$ under the following correspondences
respectively.
\begin{align*}
&e_1\mapsto x,\ f_1\mapsto y,\  h'_i\mapsto z_i, \ \  \ 1\leq i\leq 4,\\
&-e_2\mapsto x,\ -f_2\mapsto y,\  h'_2\mapsto z_1,\ h'_1\mapsto z_2,\  2h'_1-h'_3\mapsto z_3,\ h'_4\mapsto z_4, \\
&-[e_1, e_2]\mapsto x,\  [f_1, f_2]\mapsto y,\  h'_1+h'_2\mapsto z_1,\  \frac{1}{2}h'_3\mapsto z_2,\ h'_1-h'_2\mapsto z_3,\  h'_4\mapsto z_4.
\end{align*}
Then by Theorem \ref{main} we get the following result.
\begin{thm}
    There does not exist a compatible root graded anti-pre-Lie algebraic structure on $\ff_4$.
\end{thm}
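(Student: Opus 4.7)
The plan is to apply Theorem \ref{main} with $n=4$ and $\gg = \ff_4$, using the three subalgebras $FS_4, FS'_4, FS''_4$ already exhibited just before the theorem statement, together with the common abelian subalgebra $\hh = \C h'_1 \oplus \C h'_2 \oplus \C h'_3 \oplus \C h'_4$. Concretely, I would set
$$
x_1 = e_1,\ y_1 = f_1,\quad x_2 = -e_2,\ y_2 = -f_2,\quad x_3 = -[e_1,e_2],\ y_3 = [f_1,f_2],
$$
and take the three maps $T_1,T_2,T_3$ to be the isomorphisms $FS_4 \to \bb_4$, $FS'_4 \to \bb_4$, $FS''_4 \to \bb_4$ exhibited in the correspondences listed above. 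Under the chosen identifications $z'_i := h'_i$, these maps send $(x_k,y_k)$ to $(x,y)$ and carry the distinguished pair of Cartan elements to $(z_1,z_2)$ exactly as required by conditions (1)--(3) of Theorem \ref{main}.

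The main task is then to verify condition (4), namely $[x_1,x_2] = x_3$. This is immediate from the definitions: $[x_1, x_2] = [e_1, -e_2] = -[e_1,e_2] = x_3$. Finally, the root grading condition on the anti-pre-Lie structure $(\ff_4,\circ)$ (assumed for contradiction) translates, under each $T_k$, precisely into Eqs.~\eqref{graded-gg-1}--\eqref{graded-gg-2} for $\gg = \ff_4$, because the Cartan subalgebra $\hh$ is the weight-zero component and each of $x_k, y_k$ is a root vector whose root is nonzero and opposite to that of the partner. Hence all hypotheses of Theorem \ref{main} are fulfilled, and the theorem delivers the desired contradiction.

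The only point that needs any genuine checking is that the three subalgebras really are isomorphic to $\bb_4$, i.e., that the Chevalley--Serre relations in $\ff_4$ force the bracket relations defining $\bb_4$ on each triple of generators together with the Cartan. For $FS_4$ this is clear from $[h'_i,e_1] = N_{i1}e_1$, $[h'_i,f_1] = -N_{i1}f_1$, $[e_1,f_1]=h'_1$, with $h'_1$ playing the role of $z_1$ (yielding eigenvalues $\pm 2$) and $h'_2$ contributing $-1$ to $e_1$ (so $h'_2$ plays the role of $z_2$), while $h'_3,h'_4$ act trivially on $e_1,f_1$ (from the entries $N_{i1}=0$ for $i=3,4$). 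The same check, carried out symmetrically, validates $FS'_4$ and $FS''_4$: the coefficients in the chosen combinations (e.g.\ $2h'_1 - h'_3$ in $FS'_4$ and $\tfrac12 h'_3$ in $FS''_4$) are precisely what is needed to kill the nontrivial weight of the relevant generator, using the non-symmetric row of $N$ (the entry $N_{32} = -2$) that distinguishes $F_4$ from the simply-laced case. I expect this compatibility with the non-simply-laced Cartan matrix to be the only subtle step; once it is in hand, Theorem \ref{main} closes the argument exactly as in the $E$-series case.
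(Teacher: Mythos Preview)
Your proposal is correct and follows exactly the same approach as the paper: the paper's proof consists solely of the sentence ``Then by Theorem \ref{main} we get the following result,'' relying on the three subalgebras $FS_4, FS'_4, FS''_4$ and their isomorphisms to $\bb_4$ displayed immediately before the theorem. You have simply spelled out the verification that these data satisfy conditions (1)--(4) of Theorem \ref{main} (including the choice $z'_i = h'_i$ and the check $[e_1,-e_2] = -[e_1,e_2]$), which the paper leaves implicit.
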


\begin{case}
The simple Lie algebra $\gg_2$, which is $G_2$.
\end{case}

Recall that (\cite{Car, Hum}) the Cartan matrix of $G_2$ is
\begin{align*}
G=\begin{pmatrix}
2&-1\\
-3&2
\end{pmatrix}.
\end{align*}
The simple Lie algebra $\gg_2$ is generated by $e_1, e_2, h'_1, h'_2, f_1, f_2$ with the
following relations.
\begin{align*}
&[h'_i, h'_j]=[h'_i, e_j]-G_{ij}e_j=[h'_i, f_j]+G_{ij}f_j=[e_i, f_i]-h'_i=0, \ \  1\leq i, j\leq 2, \\
&[e_i, f_j]=ad_{e_i}^{1-G_{ij}}e_j=ad_{f_i}^{1-G_{ij}}f_j=0, \ \  1\leq i\ne j\leq 2,
\end{align*}
where $G_{ij}$ is the element in the $i$-th row and $j$-th column
of the matrix $G$,  $1\leq i,j\leq 2$ and $ad_{e_i}e_j=[e_i,
e_j]$, $ad_{f_i}f_j=[f_i, f_j]$, $1\leq i\ne j\leq 2$. Obviously
the Lie algebra $\gg_2$ still has three different Lie subalgebras
which are isomorphic to $\bb_2$. For example,
\begin{align*}
&GS_2={\rm span}\{e_1, f_1, h'_1, \frac{1}{3}h'_2\}, \\
&GS'_2={\rm span}\{e_2, f_2, h'_2, h'_1\}, \\
&GS''_2={\rm span}\{-[e_1, e_2], [f_1, f_2], 3h'_1+h'_2, h'_2\},
\end{align*}
are isomorphic to $\bb_2$ under the following correspondences
respectively.
\begin{align*}
&e_1\mapsto x,\ f_1\mapsto y,\  h'_1\mapsto z_1, \ \frac{1}{3}h'_2\rightarrow z_2, \\
&e_2\mapsto x,\ f_2\mapsto y,\  h'_2\mapsto z_1,\ h'_1\mapsto z_2,\\
&-[e_1, e_2]\mapsto x,\  [f_1, f_2]\mapsto y,\  3h'_1+h'_2\mapsto z_1,\  h'_2\mapsto z_2.
\end{align*}
But we cannot directly apply the Theorem \ref{main} since $GS_2,
GS'_2$ and $GS''_2$ do not satisfy the conditions of Theorem
\ref{main}. Nevertheless, we still have the following conclusion.

\begin{thm}\label{thm:g2}
    There does not exist a compatible root graded anti-pre-Lie algebraic structure on $\gg_2$.
\end{thm}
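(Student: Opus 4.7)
The strategy is to mimic the contradiction from Theorem~\ref{main}. The reason Theorem~\ref{main} cannot be invoked verbatim is the mismatch between the Cartan identifications and the bracket: in $GS_2$ the second generator is $\tfrac13 h'_2$ (not $h'_2$); in $GS''_2$ the first generator is $3h'_1+h'_2$ (not $h'_1+h'_2$); and the bracket satisfies $[e_1,e_2]=-x_3$ rather than $[x_1,x_2]=x_3$. So the clean formula $x_3\circ(z'_1+z'_2)$ available in Theorem~\ref{main} is unavailable, and one must replay its argument using the actual coefficients.

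The first step is to assume a compatible root graded anti-pre-Lie structure $(\gg_2,\circ)$ exists and restrict it to each of $GS_2,GS'_2,GS''_2$. Each restriction satisfies Eqs.~\eqref{graded-bn-1}--\eqref{graded-bn-2} (by the root graded hypothesis), so Proposition~\ref{V2(n-1)V0} applies. Reading off the right actions of the Cartan generators on $e_1,e_2,[e_1,e_2]$ through the three isomorphisms yields
\begin{align*}
&e_1\circ h'_1=-4e_1,\quad e_1\circ h'_2=6e_1,\\
&e_2\circ h'_1=2e_2,\quad e_2\circ h'_2=-4e_2,
\end{align*}
and, using $x_3=-[e_1,e_2]$ together with $x_3\circ z'_1=-4x_3$, $x_3\circ z'_2=2x_3$ in $GS''_2$ and solving for the action of $h'_1,h'_2$ separately,
\begin{align*}
[e_1,e_2]\circ h'_1=2[e_1,e_2],\qquad [e_1,e_2]\circ h'_2=-2[e_1,e_2].
\end{align*}

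The second step is to apply the anti-pre-Lie identity $e_1\circ(e_2\circ z)-e_2\circ(e_1\circ z)=[e_2,e_1]\circ z$ at $z=h'_1$ and $z=h'_2$, using the above data and the fact that $[e_2,e_1]=-[e_1,e_2]$. Setting $a=e_1\circ e_2$ and $b=e_2\circ e_1$, the two choices of $z$ give two $\C$-linear equations whose right-hand sides are scalar multiples of $[e_1,e_2]$. Together with the tautology $a-b=[e_1,e_2]$ coming from the sub-adjacent Lie bracket, one obtains an over-determined linear system; the expected outcome (mirroring the $-4x_3=2x_3$ collapse in Theorem~\ref{main}) is that the system forces a nontrivial scalar multiple of $[e_1,e_2]$ to vanish, forcing $[e_1,e_2]=0$. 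Since $[e_1,e_2]$ is a nonzero root vector in $\gg_2$, this is the desired contradiction. The only real obstacle is careful bookkeeping of coefficients through the three isomorphisms and of the sign introduced by $x_3=-[e_1,e_2]$; once that is done, the linear algebra at the end is short and self-contained.
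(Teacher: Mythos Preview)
Your strategy is essentially the paper's: restrict to $GS_2,GS'_2,GS''_2$, apply Proposition~\ref{V2(n-1)V0}, and then use the anti-pre-Lie identity with $z$ in the Cartan to obtain an inconsistent scalar relation on $[e_1,e_2]$.

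One concrete slip: your displayed values for $[e_1,e_2]\circ h'_1$ and $[e_1,e_2]\circ h'_2$ have the signs reversed. With $x_3=-[e_1,e_2]$, $z'_1=3h'_1+h'_2$, $z'_2=h'_2$, the relations $x_3\circ z'_1=-4x_3$ and $x_3\circ z'_2=2x_3$ give
\[
[e_1,e_2]\circ(3h'_1+h'_2)=-4[e_1,e_2],\qquad [e_1,e_2]\circ h'_2=2[e_1,e_2],
\]
hence $[e_1,e_2]\circ h'_1=-2[e_1,e_2]$ (not $+2$) and $[e_1,e_2]\circ h'_2=2[e_1,e_2]$ (not $-2$). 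This is exactly the sign hazard you flagged; with the correct values your over-determined system for $a=e_1\circ e_2$, $b=e_2\circ e_1$ is still inconsistent, so the sketch goes through. The paper short-circuits the linear algebra by choosing the single element $z=h'_1+\tfrac{3}{5}h'_2$, for which $e_1\circ z=-\tfrac{2}{5}e_1$ and $e_2\circ z=-\tfrac{2}{5}e_2$, so the left side of the identity collapses to $-\tfrac{2}{5}[e_1,e_2]$ while the right side is $\tfrac{4}{5}[e_1,e_2]$.
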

\begin{proof}
Assume that $(\gg_2, \circ)$ is a compatible root graded
anti-pre-Lie algebraic structure on $\gg_2$. Then by the root
graded condition and Proposition \ref{V2(n-1)V0} we get
\begin{align*}
&e_1\circ h'_1=-4e_1, \  e_1\circ \frac{1}{3}h'_2=2e_1,\ e_2\circ h'_2=-4e_2, \ e_2\circ h'_1=2e_2, \\
& -[e_1, e_2]\circ (3h'_1+h'_2)=4[e_1, e_2],\  -[e_1, e_2]\circ h'_2= -2[e_1, e_2].
\end{align*}
Hence we deduce
\begin{align*}
\frac{4}{5}[e_1, e_2]&= -[e_1, e_2]\circ (h'_1+\frac{3}{5}h'_2)\\
&=e_1\circ(e_2\circ (h'_1+\frac{3}{5}h'_2))-e_2\circ(e_1\circ (h'_1+\frac{3}{5}h'_2))=-\frac{2}{5}e_1\circ e_2+\frac{2}{5}e_2\circ e_1\\
&=-\frac{2}{5}[e_1, e_2],
\end{align*}
which is a contradiction. This completes the proof of the
conclusion.
\end{proof}

Combining Theorems~\ref{An}-\ref{thm:g2} together, we have the
following conclusion.

\begin{cor}
There does not exist a compatible root graded anti-pre-Lie
algebraic structure on any finite-dimensional complex simple Lie
algebra except ${\rm sl_2(\C)}$.
\end{cor}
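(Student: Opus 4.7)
The plan is to simply invoke the Cartan--Killing classification of finite-dimensional complex simple Lie algebras and string together the case-by-case results already established in the paper. Recall that any finite-dimensional complex simple Lie algebra is isomorphic to exactly one of the following: ${\rm sl_2(\C)}$; $A_n$ for $n\geq 2$; $B_n$ for $n\geq 2$; $C_n$ for $n\geq 3$; $D_n$ for $n\geq 4$; or one of the five exceptional algebras $E_6, E_7, E_8, F_4, G_2$. So the corollary asserts precisely that each item on this list, other than ${\rm sl_2(\C)}$, admits no compatible root graded anti-pre-Lie structure.

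First I would observe that the non-existence for $A_n$ with $n\geq 3$ is Theorem~\ref{An}, while $A_2 = {\rm sl_3(\C)}$ is covered by Theorem~\ref{A2}. For the $B$-series, Theorem~\ref{Bn} handles $B_n$ with $n\geq 3$, and the low-rank case $B_2 = \oo(5,\C)$ is settled separately by Theorem~\ref{B2} (which was needed precisely because the specific $\bb_2$-subalgebras chosen in $\oo(5,\C)$ fail to satisfy condition~(4) of Theorem~\ref{main}, forcing a direct argument). The $C$- and $D$-series are handled by Theorems~\ref{Cn} and~\ref{Dn} respectively, using the same three $\bb_n$-subalgebras $BS_n, BS'_n, BS''_n$ already constructed.

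For the exceptional algebras, Theorem~\ref{E} covers $\ee_6$, together with the explicit remark in that theorem that the same reasoning applies to $\ee_7$ and $\ee_8$; the unlabeled theorem right before Case~$G_2$ disposes of $\ff_4$; and Theorem~\ref{thm:g2} handles $\gg_2$ (again requiring a direct commutator computation analogous to Theorem~\ref{main}, since the natural $\bb_2$-subalgebras in $\gg_2$ do not meet condition~(4)). Concatenating these nine theorems exhausts the classification list.

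There is essentially no obstacle: the corollary is purely a bookkeeping statement once all the casework is in place. The only point worth verifying carefully is that the list $A_n\ (n\geq 2), B_n\ (n\geq 2), C_n\ (n\geq 3), D_n\ (n\geq 4), E_6, E_7, E_8, F_4, G_2$ really covers every finite-dimensional complex simple Lie algebra apart from ${\rm sl_2(\C)}=A_1$, which is standard (see \cite{Car, Hum}). Hence the proof reduces to one sentence: apply Theorems~\ref{An}--\ref{thm:g2} to the corresponding Lie algebra in the Cartan--Killing list.
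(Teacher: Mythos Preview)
Your proposal is correct and matches the paper's approach exactly: the paper simply states that the corollary follows by combining Theorems~\ref{An}--\ref{thm:g2}, which is precisely the case-by-case exhaustion of the Cartan--Killing classification that you spell out. Your write-up is more explicit than the paper's single sentence, but the content is identical.
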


\noindent {\bf Acknowledgements.} This work is supported by NSFC
(11931009, 12271265, 12261131498, 12326319, 12401032), National Postdoctoral
Program for Innovative Talents of China (BX20220158), China
Postdoctoral Science Foundation (2022M711708), Fundamental
Research Funds for the Central Universities and Nankai Zhide
Foundation. The authors thank the referees for valuable suggestions to improve the paper.

\end{document}